\documentclass{article}
\pdfoutput=1
\usepackage[pdfencoding=auto, psdextra, pagebackref, breaklinks]{hyperref}

\usepackage{L_func}
\usepackage[title]{appendix}

\title{Modules of Zeta Integrals for $\mathrm{GL}(1)$}
\author{Gal Dor\\
Tel-Aviv University}
\date{December 2020}

\begin{document}

\maketitle

\begin{abstract}
    We categorify the Hecke L-functions of $\GL(1)$ by replacing the L-functions with ``modules of zeta integrals''. These modules of zeta integrals are generated by the classical L-function. This approach allows us to categorify questions regarding L-functions, as well as make their construction more canonical by avoiding the GCD procedure usually used to define them.
    
    Let $F$ be a number field, and let $\Chars(F)$ be the space of Hecke characters $\chi\co\AA_F^\times/F^\times\ra\CC^\times$ on $F$. We define a ring $\cS$ of holomorphic functions on $\Chars(F)$ and an $\cS$-module $\cL$ of zeta integrals on $\Chars(F)$. Using a canonical trivialization of the line bundle $\cL$ in some localization $\cS'$ of $\cS$, we show that the module of zeta integrals $\cL$ contains the same information as the Hecke L-function of $\GL(1)$. Here, the L-function is thought of as a single function on $\Chars(F)$.
    
    In the paper \cite{abst_aut_reps_arxiv}, the author has already implicitly used this idea to derive non-trivial applications for the theory of automorphic representations. The goal of this paper make the notion of ``module of zeta integrals'' explicit and rigorous for future applications.
    
    The end result is very closely related to a construction by Connes and Meyer, although seen from an alternative point of view.
    
    This paper is based on a part of the author's thesis \cite{alg_struct_L_funcs}.
\end{abstract}

\tableofcontents

\section{Introduction}

The goal of this paper is to present an alternative but equivalent approach to the classical construction of L-functions for automorphic representations of $\GL(1)$. Philosophically, we are trying to answer the question ``where does an L-function live?'' in the simplest case, Tate's classical construction of Hecke L-functions for $\GL(1)$.

Let $F$ be a number field. There are several ways to construct the L-function for an automorphic representation over $F$. For example, one has Tate's thesis for Hecke L-functions for $\GL(1)$, or the methods of Jacquet--Langlands and Godement--Jacquet for $\GL(n)$. Regardless of the chosen method, one has to choose a \emph{test function} out of some space (e.g., $S(\AA_F)$ for Tate's thesis), to which a zeta integral is assigned. If one chooses the ``right'' test function, whose zeta integral is a GCD for all possible ones, then the resulting zeta integral is called an L-function.

Instead, our approach is to construct a ring $\cS$ and an $\cS$-module $\cL$ of all zeta integrals. It turns out that the module of zeta integrals $\cL$ already contains the same data as the automorphic L-function for $\GL(1)$, and can be thought of as a categorification of the L-function. In this language, the GCD procedure usually used to select a specific L-function out of all zeta integrals becomes a search for a \emph{generator} for the module $\cL$.

If we further specialize to totally real $F$, we can also give an algebro-geometric interpretation to our construction. Let us give a brief overview.

Suppose that $F$ is totally real. Denote by $\Chars(F)$ the space of Hecke characters of $F$. In this paper, we are concerned with the definition of Hecke L-functions as functions on the space $\Chars(F)$. In this setting, the module $\cL$ can be turned into a sheaf of modules on $\Chars(F)$. Sections of this module that are supported on finitely many connected components are zeta integrals, and generators of this module are Hecke L-functions. Similarly, $\cS$ can be turned into a sheaf of rings of functions on the space $\Chars(F)$.

We will use this point of view to algebraically reformulate and categorify several important properties of Hecke L-functions. Put differently, we are proposing that it is beneficial to think of L-functions in more geometric terms, such as modules and sheaves, instead of as specific functions. From this point of view, the existence of an actual \emph{function} which generates our module $\cL$, the L-function itself, is almost coincidental to the theory. Even if the module $\cL$ did not in fact have a generator, the theory could proceed without a problem.

Moreover, it turns out that thinking of L-functions in terms of the module of zeta functions that they generate has unexpected benefits. In the same manner that thinking of vector spaces in basis-free terms allows one to shift the emphasis from the properties of specific elements of vector spaces to the properties of maps between them, one can use this idea to produce new results. In \cite{abst_aut_reps_arxiv}, the author compares two formulas for zeta integrals for $\GL(2)$ that give the same L-function: the Godement--Jacquet construction and the Jacquet--Langlands construction. The fact that they give the same L-function is well-known. But enhancing this into a correspondence between modules of zeta integrals turns out to induce a novel multiplicative structure on the category of $\GL(2)$-modules, with applications to the theory of automorphic representations.

It must be noted that a similar approach has been taken before by Connes (e.g. Section~3.3 of \cite{riemann_F_one}), and Meyer in \cite{zeta_rep}. Connes and Meyer are able to, in the case of $\GL(1)$, canonically construct a virtual representation whose spectrum is the zeroes (minus the poles) of the L-function. This works because for $\GL(1)$, the module $\cL$ is locally free of rank $1$ over $\cS$, which allows one to attach a corresponding divisor $[\cL]-[\cS]$ over the spectrum of $\cS$. The virtual representation constructed by Connes and Meyer is this divisor, and in fact their construction of it goes through constructing our $\cS$ and $\cL$. However, Connes and Meyer put the focus on the ``divisor'' $[\cL]-[\cS]$ instead of the locally free module $\cL$ itself.

\date{\textbf{Acknowledgements: }The author would like to thank his advisor, Joseph Bernstein, for providing the inspiration for this paper, and going through many earlier versions of it. The author would also like to thank both Shachar Carmeli and Yiannis Sakellaridis for their great help improving the quality of this text.}

\subsection{Detailed Summary}

Let us give a more detailed account of this paper's main ideas. Let $\chi\in\Chars(F)$ be some Hecke character $\chi\co\AA^\times/F^\times\ra\CC^\times$. Tate's classical construction of the complete Hecke L-function $\Lambda(\chi,s)$ of $\chi$ works by a GCD procedure. One defines a zeta integral
\begin{equation} \label{eq:tate_zeta_integral}
    \int_{\AA^\times}\Phi(x)\chi(x)\abs{x}^s\dtimes{x}
\end{equation}
for every appropriate \emph{test function} $\Phi\in S(\AA)$. These zeta integrals are all meromorphic functions of $s\in\CC$. Every test function gives a zeta integral; however, some zeta integrals are distinguished. It turns out that the collection of zeta integrals has a greatest common divisor, a meromorphic function $\Lambda(\chi,s)$ such that all zeta integrals are multiples of $\Lambda(\chi,s)$ by an entire function. In this text, we will refer to any such GCD as a \emph{complete L-function} of $\chi$. Note that this terminology is usually used to refer to a specific standard choice of GCD.

The above GCD procedure only defines $\Lambda(\chi,s)$ up to multiplication by an entire invertible function. For some properties of the L-function, this is enough. For example, the zeroes of the L-function are well-defined in this formalism. This formulation also naturally satisfies a functional equation.

However, there are other questions that one would like to ask about the L-function that do not work quite as well with this definition. For example, one is often interested in special values of the L-function. Therefore, in order to speak about specific values of $\Lambda(\chi,s)$, there exists a standard (somewhat ad hoc) choice of GCD in a place-by-place manner in $F$. This expresses $\Lambda(\chi,s)$ as a product of standard L-factors over all places of $F$.

As another example, the growth of the L-function in vertical strips (as $s\ra\sigma\pm i\infty$) is an important problem. However, with the standard choice of GCD, the function $\Lambda(\chi,s)$ decreases rapidly in vertical strips. This is due to the choice of L-factor at the Archimedean places, which decreases so fast that all other behaviour becomes irrelevant. In order to remedy this, the usual approach is to work with the \emph{incomplete L-function} $L(\chi,s)$, which simply drops the L-factors at $\infty$.

In this paper, we will give an alternative approach to the GCD construction. We define a ring $\cS$ of functions over $\Chars(F)$, and a module $\cL$, which we think of as the module of zeta integrals. Our view is that the fundamental object of the theory is this module of zeta integrals. To relate this to the standard view, we will establish a canonical correspondence between generators of $\cL$, i.e. isomorphisms of modules
\[
    t\co\cS\xrightarrow{\sim}\cL,
\]
and a set of functions on $\Chars(F)$ which are complete L-functions under the standard formulation of the GCD procedure (see Construction~\ref{const:gens_are_L_funcs}).

In this manner, claims about Hecke L-functions (growth, factorization under Abelian extensions, etc.) can be refined to explicit claims about the module $\cL$. The main point is that we are able to provide a geometric viewpoint which gives a more geometric flavor to the GCD definition of L-functions, i.e. exhibits them as generators of a module.

A less formal, but perhaps more descriptive, explanation is as follows. We introduce the ring $\cS$, its module $\cL$, and a canonical isomorphism of $\cL$ with an extension $\cS'$ of $\cS$ after base change:
\[
    \cS'\otimes_\cS \cL\xrightarrow{\sim}\cS'.
\]
Geometrically, the extension $\cS\subseteq\cS'$ corresponds to restricting to a specific right-half-plane. This turns $\cL$ into a kind of ``divisor'' on the space $\Chars(F)$, supported on the corresponding left-half-plane. This ``divisor'' turns out to be principal (in an informal sense), and L-functions are precisely those functions which generate it (see Remark~\ref{remark:L_is_divisor} for more details). The virtual representation constructed by Meyer in \cite{zeta_rep} is the sky-scraper sheaf at this divisor, $[\cL]-[\cS]$.

On top of providing an interesting perspective, this will be useful in several immediate ways. First of all, the proposed reformulation will enable us to refine some properties of L-functions as algebraic statements about the module $\cL$. We will include examples such as the functional equation, and more precise statements about decomposition properties of L-functions under Abelian extensions. Specifically, it turns out that the fact that L-functions of Abelian extensions decompose into a product of L-functions of characters can be categorified into an explicit canonical decomposition of the corresponding spaces of zeta integrals. See Remark~\ref{remark:cubic_L_decomposes} for an interesting application.

Moreover, the set of functions on $\Chars(F)$ given by the algebraic construction is somewhat smaller than that given by the GCD construction. This means that for some statements about the growth of the L-function as $s\ra\sigma\pm i\infty$, no non-canonical choice of representative is necessary.

Let us elaborate. It turns that out of the complete L-functions given by the GCD construction, the subset that correspond to generators of $\cL$ always have an additional property: they have moderate growth and decay properties in vertical strips. Hence, it makes sense to talk about their growth without introducing the incomplete L-function, or even choosing a standard representative. In other words, the geometric point of view specifies not only the zeroes of the L-function, but also its growth properties. Technically, this happens because the new formalism rejects the standard L-factors at the Archimedean places, which decay much too quickly in vertical strips to generate $\cL$. Instead, different Archimedean local L-factors are required, which ``fixes'' their growth. See Remark~\ref{remark:growth_of_L_in_vertical_strips} and Appendix~\ref{app:non_canonical_triv} for further discussion of this issue.

Another kind of result that can be achieved using the geometric point of view is the trace formula given in \cite{zeta_rep}, where Meyer essentially derives the explicit formula for Hecke L-functions from the geometric construction without any reference to the underlying L-function. However, we will not pursue this direction here.

The organization of this paper is as follows. Section~\ref{sect:dixmier_malliavin} contains a brief reminder about the Dixmier-Malliavin theorem, following \cite{dixmier_malliavin_for_born_arxiv}. Section~\ref{sect:module_S} introduces the ring $\cS$ of functions on $\Chars(F)$, and studies its properties. Section~\ref{sect:module_L} introduces the module $\cL$. Section~\ref{sect:canonical_triv} establishes the correspondence between generators of $\cL$ and L-functions. Section~\ref{sect:functional_equation} re-introduces the functional equation in the new language. Section~\ref{sect:decomposition_under_ext} studies the decomposition properties of L-functions under Abelian extensions. Finally, Appendix~\ref{app:non_canonical_triv} shows that a generator for the module $\cL$ actually exists, and discusses the various local L-factors given by the new formalism.

\begin{remark}
    While $\GL(1)$ is a nice test-case, the theory of L-functions and automorphic representations often focuses on higher rank reductive groups, such as $\GL(n)$. While the contents of this text can be generalized in various ways to (say) $\GL(2)$, the non-commutativity of the group adds a severe technical complication that we will not handle here.
    
    It is the author's belief that the symmetric monoidal structure constructed in \cite{abst_aut_reps_arxiv}, which seems to help $\GL(2)$ manifest some ``commutative-like'' phenomena, would be useful in such an endeavor. See Remarks~3.1 and~4.1 of \cite{abst_aut_reps_arxiv} for details on these phenomena.
\end{remark}

\begin{remark}
    When working over number fields, a major technical tool that we will use is the notion of \emph{bornological vector space}. This notion greatly resembles that of the more commonly used \emph{topological vector space}, but is technically simpler and more suitable for studying the representation theory of locally compact groups. For example, the ring $\cS$ and module $\cL$ can be given the structure of a bornological ring and bornological module, respectively.
    
    However, bornologies are not the focus of this paper. Rather, we consider bornological vector spaces because they admit a strong version of the Dixmier-Malliavin theorem, which we want to use. Specifically, we will need the variant given in \cite{dixmier_malliavin_for_born_arxiv}.
    
    Moreover, this notion is not actually needed when $F$ is a function field, as the Dixmier-Malliavin theorem becomes trivial in this case. Readers can safely take $F$ to be a function field, and consequently ignore all bornological structures appearing in this text.
    
    We will give a brief reminder on the Dixmier-Malliavin theorem in Section~\ref{sect:dixmier_malliavin}.
\end{remark}

\section{Reminder on the Dixmier-Malliavin Theorem} \label{sect:dixmier_malliavin}

This section is a brief reminder (closely following \cite{dixmier_malliavin_for_born_arxiv}) about bornological vector spaces and their relation with the Dixmier-Malliavin theorem, which are used in the main body of the text. This section should not be considered original contribution. Readers who are only interested in the case of L-functions over function fields can safely skip this section and ignore all mentions of bornological structures.

Bornological vector spaces will be used in much of this text as a technically favorable alternative for topological vector spaces. In many applications, the two notions are very similar. However, while they are close enough that many theorems can be successfully stated both in terms of bornologies and in terms of topologies, there are still cases where one language is preferable to the other. For example, there are some theorems (especially in the representation theory of locally compact groups) that have many technical requirements when stated in the traditional language of topological vector spaces. However, these extra technical assumptions disappear when the theorem is stated in the language of bornological vector spaces instead.

The main way we will use the notion of a bornological vector space in this text is because they support a stronger variant of the Dixmier-Malliavin theorem. More concretely, we will use bornological structures to prove that certain rings satisfy a property called quasi-unitality.

Specifically, we say that a (non-unital) ring $R$ is \emph{quasi-unital} if the product map:
\[
    R\otimes_R R\ra R
\]
is an isomorphism, where the relative tensor product $R\otimes_R R$ is the quotient of $R\otimes R$ by all expressions of the form
\[
    (ab)\otimes c-a\otimes(bc).
\]
Similarly, if $R$ is a quasi-unital ring, and $M$ is an $R$-module, then we say that $M$ is \emph{smooth} if the action map:
\[
    R\otimes_R M\ra M
\]
is an isomorphism. Once again, we take $R\otimes_R M$ to be the relative tensor product.

Let $G$ be an algebraic group, and let $F$ be a number field. Let $G(\AA)=G(\AA_F)$ denote the adelic points of $G$ over $F$. Let $C_c^\infty(G(\AA))$ be the (non-unital) ring of smooth and compactly supported functions on $G$, equipped with the convolution product. The ring $C_c^\infty(G(\AA))$ is naturally a bornological non-unital ring. The main result we need from \cite{dixmier_malliavin_for_born_arxiv} is the following variant of the Dixmier-Malliavin theorem:
\begin{theorem}[Theorem~5.1 of \cite{dixmier_malliavin_for_born_arxiv}] \label{thm:adelic_garding_is_smooth}
    The following hold:
    \begin{enumerate}
        \item The ring $C_c^\infty(G(\AA))$ is quasi-unital.
        \item Let $V$ be a complete bornological vector space, equipped with a smooth action of $G(\AA)$ on $V$. Then $V$ is smooth as a $C_c^\infty(G(\AA))$-module.
    \end{enumerate}
\end{theorem}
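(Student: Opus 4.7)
The plan is to reduce both statements to a local analysis, exploiting the factorization of $C_c^\infty(G(\AA))$ as a restricted bornological tensor product $\bigotimes'_v C_c^\infty(G(F_v))$ over all places $v$ of $F$, with respect to the idempotents $e_{K_v} = \mathrm{vol}(K_v)^{-1}\mathbf{1}_{K_v}$ attached to hyperspecial maximal compact subgroups $K_v\subseteq G(F_v)$ at almost all finite places. Since the relative tensor product interacts well with colimits and with bornological tensor products of quasi-unital rings, it suffices to prove quasi-unitality at each place separately, and, for the second statement, to show that $V$ viewed locally at each place is smooth as a module over the corresponding local convolution algebra.

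At a non-Archimedean place $v$, the argument is essentially formal. Every $f \in C_c^\infty(G(F_v))$ is bi-invariant under some compact open $K\subseteq G(F_v)$, so $f = e_K * f = f * e_K$. This provides an explicit inverse to the multiplication map on the subring of $K$-bi-invariants; taking the colimit over shrinking $K$ yields quasi-unitality for the full ring. The analogous statement for a smooth module follows because any smooth vector is $K$-fixed for some such $K$, and $e_K$ then acts as the identity on it. Injectivity of the multiplication map is immediate in this setting by a normalization trick: any finite sum $\sum_i f_i\otimes g_i$ can be rewritten after inserting an idempotent $e_K$ on which every $f_i$ and $g_i$ is fixed.

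At an Archimedean place $v$, where no such idempotents exist, the genuine content of the Dixmier--Malliavin theorem enters: every $f \in C_c^\infty(G(F_v))$ is a finite sum of convolutions $\sum_i g_i * h_i$, and every smooth vector $w$ in a smooth bornological representation of $G(F_v)$ is a finite sum $\sum_i \pi(g_i) w_i$ with $g_i \in C_c^\infty(G(F_v))$ and $w_i$ smooth. Surjectivity of the multiplication and action maps then follows immediately; injectivity is obtained by applying the decomposition a second time to ``absorb'' factors into the relative tensor product relations $(ab)\otimes c - a\otimes(bc)$. The main obstacle is that the classical Dixmier--Malliavin theorem is purely existential and, in its standard topological form, does not control how the pieces in such a decomposition interact with the bornology. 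This is precisely the reason for working bornologically: the refinement established in \cite{dixmier_malliavin_for_born_arxiv} produces a Dixmier--Malliavin decomposition that respects bounded sets, which is exactly what is needed for the bornological tensor product to behave correctly, and hence for the local decompositions to reassemble into the global statement via the restricted tensor product factorization above.
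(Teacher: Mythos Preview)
The paper does not actually prove this theorem: it is stated as a quotation of Theorem~5.1 of \cite{dixmier_malliavin_for_born_arxiv}, and the only additional content is the remark that the cited result is written for a Lie group $G$, with the adelic generalization following Remark~5.3 of the same reference. So there is no ``paper's own proof'' to compare against beyond this pointer.

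Your sketch is consistent with that pointer and is essentially what one expects the cited argument to be: factor $C_c^\infty(G(\AA))$ as a restricted tensor product, dispose of the non-Archimedean places by the idempotent trick, and invoke the bornological Dixmier--Malliavin theorem at the Archimedean places. This matches the paper's remark that the Lie-group case is the substantive input and the adelic case is a formal reduction. One caveat: your injectivity argument at the Archimedean places (``apply the decomposition a second time to absorb factors'') is gestured at rather than carried out, and in the bornological setting this step requires some care about how the relative tensor product is formed; but since the paper explicitly defers all of this to \cite{dixmier_malliavin_for_born_arxiv}, your level of detail already exceeds what the paper itself provides.
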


\begin{remark}
    Essentially, what Theorem~\ref{thm:adelic_garding_is_smooth} means is that given some boundedness and analytic conditions on a representation of $G(\AA)$, then that representation is a smooth $C_c^\infty(G(\AA))$-module. Note that while the prerequisites for the theorem are analytic in nature (the existence of a complete bornology on $V$, such that the action of $G(\AA)$ is smooth with respect to it), the consequence of the theorem -- smoothness as a module of a quasi-unital ring -- is purely algebraic:
    \begin{equation} \label{eq:smoothness_of_module}
        C_c^\infty(G(\AA))\otimes_{C_c^\infty(G(\AA))}V\xrightarrow{\sim} V.
    \end{equation}
    That is, Equation~\eqref{eq:smoothness_of_module} holds as an isomorphism of vector spaces, and requires no notion of bornology or completeness to state.
    
    Our uses of bornologies in this paper will almost exclusively be as a way to establish this algebraic property. That is, the main focus of this paper is algebraic, rather than analytic.
\end{remark}

\begin{remark}
    The proof of Theorem~5.1 of \cite{dixmier_malliavin_for_born_arxiv} is written for the case of a Lie group $G$. The generalization to the adelic case follows Remark~5.3 of \cite{dixmier_malliavin_for_born_arxiv}.
\end{remark}

\begin{remark}
    This theorem becomes trivially true in the case that $F$ is a function field, and $C_c^\infty(G(\AA))$ is considered the ring of smooth and compactly supported functions on $G$ in the usual sense of being locally constant. In this case, no assumptions about bornologies are necessary.
\end{remark}

For more details, we direct the reader to \cite{dixmier_malliavin_for_born_arxiv}. See also \cite{borno_quasi_unital_algs2}, which discusses bornological structures in representation theory specifically, and \cite{borno_vs_topo_analysis}, which deals more generally with bornologies and functional analysis.

\section{The Space of Automorphic Functions \texorpdfstring{$\cS$}{S}} \label{sect:module_S}

Let $F$ be a number field (not necessarily totally real), and let $\Chars(F)$ be the space of Hecke characters $\chi\co\AA_F^\times/F^\times\ra\CC^\times$ with its complex analytic topology. That is, $\Chars(F)$ is a countable disjoint union of copies of $\CC$, each parametrized by $\abs{\cdot}^s\chi$ with $s\in\CC$, for some unitary Hecke character $\chi$. We will say that the component of all Hecke characters of the form $\abs{\cdot}^s\chi$ is the component \emph{corresponding to $\chi$}.

\begin{remark} \label{remark:vertical_strips}
    We will often be speaking about functions on $\Chars(F)$ that are rapidly decreasing in vertical strips. This should be taken to mean that $\abs{f(\sigma+it)}\ra 0$ as $t\ra\infty$ faster than any polynomial in vertical strips $a\leq \sigma\leq b$, on every copy of $\CC$ separately.
\end{remark}

On the space $\Chars(F)$, there is a ring of (Mellin transforms of) Bruhat-Schwartz functions, $\cS_F$. Our main construction is a module $\cL_F$ over $\cS_F$, whose generators will correspond to L-functions.

In this section, we will construct the space $\cS_F$ itself, and establish some of its basic properties. Instead of directly defining this, let us define its Fourier-Mellin transform, which might be more easily accessible:
\begin{definition}
    Let $\Schw_F=S(\AA^\times)$ denote the (non-unital) ring of Bruhat-Schwartz functions on $\AA^\times$. Specifically, we set
    \[
        S(\AA^\times)=S(F_\infty^\times)\otimes{\bigotimes_{p}}'S(F_p^\times),
    \]
    where
    \begin{itemize}
        \item The symbol $\bigotimes'$ denotes the restricted tensor product over the finite (i.e., non-Archimedean) places of $F$, with respect to the characteristic function $\one_{\O_p^\times}$.
        \item For a non-Archimedean place $p$, the space $S(F_p^\times)$ is the space of smooth and compactly supported functions on $F_p^\times$.
        \item For the Archimedean places, the space $S(F_\infty^\times)$ is the space of smooth functions $f$ on $F_\infty^\times=\prod_{v|\infty}F_v^\times$, satisfying that
        \[
            \abs{\chi(y)\cdot Df(y)}
        \]
        is bounded for all multiplicative characters $\chi\co F_\infty^\times\ra\CC^\times$ and differential operators $D$ in the universal enveloping algebra of the Lie algebra of the real group $F_\infty^\times$.
    \end{itemize}
    We give $\Schw_F=S(\AA^\times)$ a ring structure via the convolution product with respect to the standard Haar measure on $\AA^\times$ (normalized as in, e.g., page~46 of \cite{aut_reps_book_I}).
    
    When the number field $F$ is clear from context, we will abuse notation and denote $\Schw=\Schw_F$.
\end{definition} %??? Schwartz functions on Nash manifold. Add reference?

\begin{remark}
    The ring $\Schw$ naturally acquires a bornology as follows.
    
    We give each of the spaces $S(F_p^\times)$ a bornology consisting of the bounded subsets of its finite-dimensional linear subspaces. Likewise, we give $S(F_\infty^\times)$ a bornology consisting of the subsets where for each $\chi$ and $D$, the expression $\abs{\chi(y)\cdot Df(y)}$ above is bounded uniformly for $f$ and $y$.

    The bornivorous topology associated to this bornology on $\Schw$ is the usual topology of Bruhat-Schwartz functions on $\AA^\times$.
\end{remark}

\begin{remark} \label{remark:schwartz_at_infty_decay_exp}
    An alternative way to view $S(F_\infty^\times)$ is as follows. By applying the product of the logarithm maps over the Archimedean places $v|\infty$, we can identify $F_\infty^\times$ with a disjoint union of a finite number of copies of $\RR^{r_1+r_2}\times\left(\RR/\ZZ\right)^{r_2}$. Under this identification, the space $S(F_\infty^\times)$ corresponds to smooth functions, all of whose derivatives are decreasing faster than any exponential in the \emph{logarithmic} coordinates $\RR^{r_1+r_2}$.
\end{remark}

\begin{remark}
    The reader should note that we are actually working with \emph{measures}, rather than \emph{functions}, on the space $\AA^\times$ (since we are taking their integrals and convolutions). However, for the sake of simplicity, we are relying on the standard Haar measure $\dtimes{g}$ (see, e.g., page~46 of \cite{aut_reps_book_I} for an explicit description of this standard normalization) to abuse notation and speak of functions regardless. This is done in order to simplify the notation and exposition, while hopefully not confusing the reader too much.
\end{remark}

\begin{remark}
    The bornological space $\Schw=S(\AA^\times)$ is complete, and the action of $\AA^\times$ on it is smooth. Thus, by Theorem~\ref{thm:adelic_garding_is_smooth} and Claim~3.20 of \cite{dixmier_malliavin_for_born_arxiv}, the ring $S(\AA^\times)$ is quasi-unital. Note that this is a purely-algebraic property.
\end{remark}

\begin{definition}
    Let $\cS_F=\Schw_F/F^\times$ be the space of co-invariants of $\Schw_F=S(\AA^\times)$ by the action of $F^\times$ via multiplicative shifts.
    
    When the number field $F$ is clear from context, we will abuse notation and denote $\cS=\cS_F$.
\end{definition}

\begin{remark}
    We note that the map
    \[
        f(g)\mapsto\sum_{q\in F^\times}f(qg)
    \]
    defines an isomorphism of $\cS=\Schw/{F^\times}$ with what is sometimes known as the space of Bruhat-Schwartz functions on $\AA^\times/F^\times$. Thus, we will sometimes refer to $\cS$ as the \emph{space of automorphic functions}. %??? Paper by Sakellaridis of Schwartz functions on stacks (?), has a section about this.
\end{remark}

\begin{remark} \label{remark:coinv_are_closed}
    It is possible, via standard techniques, to construct a bounded section
    \[
        \cS\ra\Schw
    \]
    for the canonical projection. In particular, $\cS$ is a complete bornological space.
\end{remark}

\begin{remark}
    Because $\Schw$ is commutative, the space $\cS$ is a ring. Once again, Theorem~\ref{thm:adelic_garding_is_smooth} and Claim~3.20 of \cite{dixmier_malliavin_for_born_arxiv}, along with the fact that $\cS$ is complete by Remark~\ref{remark:coinv_are_closed}, let us conclude that $\cS$ is quasi-unital.
\end{remark}

\begin{remark} \label{remark:paley_wiener}
    When $F$ is totally real, one can use the Paley-Wiener theorem to give an alternative description for the space $\cS$. It is isomorphic via the Mellin transform to the space of functions on $\Chars(F)$ which are supported only on finitely many copies of $\CC$, and on each one they are entire functions $f(s)$ which are rapidly decreasing in vertical strips.
    
    In this view, the bornology of $\cS=\bigoplus\cS_{\chi_0}$ is the direct sum bornology induced from the subspaces $\cS_{\chi_0}$ of functions $f\in\cS$ whose Mellin transform is supported on just one copy of $\CC$ (corresponding to $\chi_0\in\Chars(F)$). The bornology on the subspaces $\cS_{\chi_0}$ themselves is the von-Neumann bornology for the topology generated by the semi-norms
    \[
        \norm{\hat{f}(s)}_{\sigma,n}=\sup_{t\in\RR}\,(1+\abs{t}^n)\abs{\hat{f}(\sigma+it)}
    \]
    for $\sigma\in\RR$ and $n\geq 0$, with
    \[
        \hat{f}(s)=\int_{\AA^\times/F^\times}f(y)\chi_0(y)\abs{y}^s\dtimes{y}
    \]
    the Mellin transform of $f$.
\end{remark}

\begin{remark}
    One can give a similar, but more complicated, description when $F$ is not totally real. See also Remark~\ref{remark:paley_wiener_at_C} for a similar issue.
\end{remark}

\begin{remark} \label{remark:schwatz_is_co_sheaf}
    A convenient way to assign geometric intuition to the space $\cS$ is to think of it as a \emph{co-sheaf} on $\Chars(F)$. This should express the fact that the Mellin transforms of elements of $\cS$ are supported on a finite number of copies of $\CC$.
    
    To be more explicit, suppose that $F$ is totally real as above. We define a co-sheaf on $\Chars(F)$ as follows. To each connected open set $U$ of $\Chars(F)$ (which necessarily lies inside a single copy of $\CC$, corresponding to $\chi_0\in\Chars(F)$), the co-sheaf assigns the subspace $\cS_{\chi_0}\subseteq\cS$ of functions supported on that copy. For arbitrary open sets $U\subseteq\Chars(F)$, we let the value of the co-sheaf be the direct sum of the values of the co-sheaf on the connected components of $U$. This turns $\cS$ into the global co-sections of a locally constant co-sheaf on $\Chars(F)$.
\end{remark}

\section{The Module of Zeta Integrals \texorpdfstring{$\cL$}{L}} \label{sect:module_L}

We are now ready for the main construction of this paper. We describe a module (of bornological vector spaces) $\cL$ over the ring $\cS$. The elements of this module will correspond to zeta integrals, and its generators $t\co\cS\xrightarrow{\sim}\cL$ will correspond to L-functions. We will show this correspondence explicitly in Section~\ref{sect:canonical_triv}.

\begin{definition}
    Let $S(\AA)$ be the $\Schw=S(\AA^\times)$-module of Bruhat-Schwartz functions on $\AA$. Specifically, we set
    \[
        S(\AA)=S(F_\infty)\otimes{\bigotimes_{p}}'S(F_p),
    \]
    where
    \begin{itemize}
        \item The symbol $\otimes'$ denotes the restricted tensor product over the finite (i.e., non-Archimedean) places of $F$, with respect to the characteristic function $\one_{\O_p}$.
        \item For a non-Archimedean place $p$, the space $S(F_p)$ is the space of smooth and compactly supported functions on $F_p$.
        \item For the Archimedean places, the space $S(F_\infty)$ is the space of Schwartz functions $f$ on $F_\infty=\prod_{v|\infty}F_v$.
    \end{itemize}
\end{definition}

\begin{remark}
    Note that $S(\AA)$ is a bornological $\Schw$-module.
\end{remark}

\begin{remark}
    Let us take a different point of view on $S(F_\infty)$. Under the identification of $F_\infty^\times$ as a finite disjoint union of copies of $\RR^{r_1+r_2}\times\left(\RR/\ZZ\right)^{r_2}$ (as in Remark~\ref{remark:schwartz_at_infty_decay_exp}), we can look at the restriction $f|_{F_\infty^\times}$ of a function $f\in S(F_\infty)$. This restriction approaches a limit as any subset of the coordinates approaches $-\infty$, and decays faster than any exponent as any of the coordinates approaches $\infty$.
\end{remark}

\begin{definition}
    We let $\cL_F$ denote the vector space of $F^\times$ co-invariants of $S(\AA)$, where $F^\times$ acts via multiplication on $\AA$.
    
    When the number field $F$ is clear from context, we will omit it from the notation and denote $\cL=\cL_F$.
\end{definition}

\begin{remark}
    It is immediate to see that $\cL$ is a bornological $\cS$-module.
\end{remark}

\begin{remark}
    Just like in Remark~\ref{remark:schwatz_is_co_sheaf}, one can gain some geometric intuition about $\cL$ by thinking of it as a co-sheaf.
    
    Suppose that $F$ is totally real. Then one can use the co-sheaf structure on $\cS$ defined in Remark~\ref{remark:schwatz_is_co_sheaf}, combined with the module structure of $\cL$, to turn $\cL$ into a co-sheaf as well.
    
    To be as explicit as possible, we construct a co-sheaf on $\Chars(F)$ as follows. To each connected open set $U$ of $\Chars(F)$, which necessarily lies inside a single copy of $\CC$ corresponding to some $\chi_0\in\Chars(F)$, the co-sheaf assigns the subspace $\cL_{\chi_0}=\cS_{\chi_0}\cdot \cL\subseteq\cL$. For arbitrary open sets $U\subseteq\Chars(F)$, we let the value of the co-sheaf be the direct sum of its values on the connected components of $U$. This turns $\cL$ into the global co-sections of a co-sheaf.
\end{remark}

General $\cS$-modules can be pretty badly behaved. However, the specific $\cS$-module $\cL$ is as nice as it can be -- in fact, it is secretly isomorphic to $\cS$, albeit in a non-canonical fashion. This fact will be proven in Appendix~\ref{app:non_canonical_triv}.

More specifically, combining Remark~\ref{remark:coinv_are_closed} with Claim~\ref{claim:A_is_A_times}, we see that the bornological vector space $\cL$ is a complete, smooth $\cS$-module. In particular, we have the algebraic property that the action map
\[
    \cS\otimes_\cS\cL\xrightarrow{\sim}\cL
\]
is an isomorphism.

\section{Correspondence with L-Functions} \label{sect:canonical_triv}

One way to think of $\cL$ is as a module of zeta integrals. Indeed, given a function $f\in \cL=S(\AA)_{/F^\times}$, by restricting it to $\AA^\times/F^\times$ and applying the Mellin transform, we precisely get a zeta integral. This is the content of Construction~\ref{const:L_into_S_prime} below, which will let us relate $\cL$ to the classical notion of an L-function, via Construction~\ref{const:gens_are_L_funcs}.

Let us give some more details. In order to have a well-defined notion of zeta integral, we will need to extend the ring $\cS$ to a sufficiently large ``ring of periods'' $\cS'$ to contain all necessary integrals. We will think of the extension $\cS\subseteq\cS'$ as the space of holomorphic functions on some ``right-half-plane'' in $\Chars(F)$. Along with $\cS'$, we will obtain a canonical trivialization
\begin{equation} \label{eq:informal_triv}
    \cS'\otimes_\cS\cL\xrightarrow{\sim}\cS',
\end{equation}
sending every element of $\cL$ (which is, essentially, a test function coming from $S(\AA)$) to its corresponding zeta integral. This is the algebraic structure which captures, in our setting, the notion of a zeta integral.

\begin{remark} \label{remark:L_is_divisor}
    There is another point of view on this issue as well, with a more geometric flavor. Consider the following informal analogy. Let $X$ be a scheme, with sheaf of functions $\O_X$. In our analogy, these correspond to $\Chars(F)$ and $\cS$, respectively. Suppose that we are given some localization $\O_X\subset\O_{X'}$ (which is our $\cS'$), allowing poles at certain places in $X$.
    
    A line bundle on $X$ is a locally free $\O_X$-module of rank one. The data of a line bundle, along with its trivialization after base change to $\O_{X'}$, is the data of a \emph{divisor} supported at $X-X'$.
    
    Therefore, geometrically, the data of the module $\cL$ along with its trivialization~\eqref{eq:informal_triv} can be thought of as a kind of ``divisor'' on the space $\Chars(F)$. The fact that $\cL$ is non-canonically isomorphic to $\cS$ is then a statement that this divisor is principal, and a function giving this principal divisor is an L-function.
\end{remark}

Since our only real requirement from $\cS'$ is that it is sufficiently large, its construction is relatively ad hoc. We will outline one possible choice for $\cS'$ in Subsection~\ref{subsect:module_S_prime}. In Subsection~\ref{subsect:L_gen_is_L_func}, we will construct the canonical trivialization~\eqref{eq:informal_triv} after base-change to $\cS'$.

\subsection{The Extension \texorpdfstring{$\cS'$}{S'}} \label{subsect:module_S_prime}

In this subsection, we will construct the extension $\cS\subseteq\cS'$, such that $\cS$ and $\cL$ become canonically isomorphic after base change to $\cS'$.

This data will be used to give a correspondence between isomorphisms $t\co\cS\xrightarrow{\sim}\cL$ of $\cS$-modules (which we think of as \emph{generators} for $\cL$) and certain L-functions $L_t$ on the space $\Chars(F)$. This correspondence justifies thinking of $\cL$ as containing the data of an L-function. See Construction~\ref{const:gens_are_L_funcs}.

Our immediate goal is to extend $\cS$ to some minimal extent that also allows $\cL$ to fit into it. This will be our choice of $\cS'$. Intuitively, we want $\cS'$ to somehow correspond to holomorphic functions on some right half-plane in $\Chars(F)$ where L-functions are absolutely convergent.

Let us begin by explicitly constructing $\cS'$. Let $\norm{g}_{\AA}$ be the height function on $\AA$ given by
\begin{equation*}
    \norm{g}_\AA=\prod_v\max\{\norm{g_v}_v,1\},
\end{equation*}
which we restrict to $\AA^\times$. Similarly, we define a height function on $\AA^\times/F^\times$ by choosing the lowest lift:
\[
    \norm{g}_{\AA/F^\times}=\inf_{\stackrel{g'\in\AA^\times}{g\equiv g'\pmod{F^\times}}}\norm{g'}_\AA.
\]

\begin{construction} \label{const:S_prime}
    The ring extension $\cS'$ of $\cS$ is given by the space of smooth functions $f$ on $\AA^\times/F^\times$ such that:
    \begin{itemize}
        \item The stabilizer of $f$ is open in $\AA_\text{fin}^\times$.
        \item There is a bound:
        \[
            \int_{\AA^\times/F^\times}\abs{g}^{1+\varepsilon}\norm{g}_{\AA/F^\times}^\sigma\cdot\abs{Df(g)}\dtimes{g}<\infty
        \]
        for all $\sigma<\infty$, $\varepsilon>0$ and $D$ in the universal enveloping algebra of $F_\infty^\times$.
    \end{itemize}
\end{construction}
\begin{remark}
    The convolution product turns $\cS'$ into a ring by observing that
    \[
        \norm{gg'}_{\AA/F^\times}\leq\norm{g}_{\AA/F^\times}\norm{g'}_{\AA/F^\times}.
    \]
\end{remark}

\begin{remark}
    The ring $\cS'$ has a natural bornological structure, according to which it is complete. By Theorem~\ref{thm:adelic_garding_is_smooth}, it follows that $\cS'$ is smooth over $\cS$.
\end{remark}

\begin{remark} \label{remark:S_prime_is_localization}
    The complete bornological ring $\cS'$ induces a localization functor
    \[
        V\mapsto\cS'\widehat{\otimes}_\cS V
    \]
    on the category of complete bornological smooth $\cS$-modules. Here, $\widehat{\otimes}_\cS$ is the completion of the relative tensor product.
    
    Indeed, we construct a natural morphism $V\ra\cS'\widehat{\otimes}_\cS V$ via
    \[\xymatrix{
         V & \cS\otimes_\cS V \ar[l]_-\sim \ar[r] & \cS'\widehat{\otimes}_\cS V.
     }\]
    Thus, it remains to verify that the completion of the multiplication map
    \[
        \cS'\widehat{\otimes}_\cS\cS'\ra\cS'
    \]
    is an isomorphism. This follows because $\cS'{\otimes}_\cS\cS'$ and $\cS'{\otimes}_{\cS'}\cS'$ share a dense subset $\cS$, with the same induced bornology.
\end{remark}

\begin{remark} \label{remark:S_prime_paley_wiener_in_lines}
    Suppose that $F$ is totally real. Then we can try giving a description of the image of $\cS'$ under the Mellin transform. This should give some geometric intuition about the ring $\cS'$.
    
    Indeed, using Remark~\ref{remark:paley_wiener}, we see that
    \[
        \cS'=\bigoplus\cS'_{\chi_0},
    \]
    where each $\cS'_{\chi_0}$ consists of functions on the right half plane $\{\abs{\cdot}^s\chi_0\suchthat\Re{s}>1\}$ of $\Chars(F)$ which are analytic in the right half-plane and rapidly decreasing in vertical strips there (in the sense of Remark~\ref{remark:vertical_strips}). Recall that we choose unitary representatives $\chi_0$ out of each connected component of $\Chars(F)$.
\end{remark}

\subsection{Canonical Trivialization of the Module of Zeta Integrals} \label{subsect:L_gen_is_L_func}

Our goal for this subsection is to construct the isomorphism $\cS'\otimes_\cS\cL\xrightarrow{\sim}\cS'$, which will be induced from a map $\cL\ra\cS'$. This will let us provide the desired correspondence $t\mapsto L_t$ between generators $t\co\cS\xrightarrow{\sim}\cL$ and L-functions $L_t$.

\begin{construction} \label{const:L_into_S_prime}
    There is a canonical morphism of bornological $\cS$-modules
    \[
        \cL\ra\cS'
    \]
    given by:
    \[
        f(g)\mapsto \sum_{q\in F^\times}f(qg).
    \]
\end{construction}

\begin{remark}
    One can similarly define a map
    \[
        \cS\ra\cS',
    \]
    using the same formula.
    
    It is easy to check that the map $\cS\ra\cS'$ is injective, by using standard techniques.
\end{remark}

\begin{remark}
    We will see later (Corollary~\ref{cor:L_inj_into_S_prime}) that the morphism $\cL\ra\cS'$ is also injective.
\end{remark}

\begin{remark}
    The morphism $\cL\ra\cS'$ of Construction~\ref{const:L_into_S_prime} already encodes all zeta integrals. To see this, suppose that we had some test function $\Psi\in S(\AA)$. Applying the map $\cL\ra\cS'$ along with the Mellin transform, we obtain the corresponding zeta integral via Equation~\eqref{eq:tate_zeta_integral}.
\end{remark}

Our main result for this section is that the map $\cL\ra\cS'$ of Construction~\ref{const:L_into_S_prime} presents $\cS'$ as the localization of $\cL$ under the functor $\cS'\hat{\otimes}_\cS-$ of Remark~\ref{remark:S_prime_is_localization}. In other words, $\cS'$ defines a localization of the category of complete smooth $\cS$-modules under which $\cS$ and $\cL$ coincide. This will justify thinking of $\cL$ together with the data of the map $\cL\ra\cS'$ as defining a ``divisor'' on $\Chars(F)$.
\begin{theorem} \label{thm:S_prime_L_isom}
    The morphism $\cL\ra\cS'$ of Construction~\ref{const:L_into_S_prime} induces an isomorphism:
    \[
        \cS'\otimes_\cS\cL\xrightarrow{\sim}\cS'.
    \]
\end{theorem}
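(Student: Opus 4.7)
The plan is to factor the map through the natural inclusion $\cS \hookrightarrow \cS'$, obtain split surjectivity almost for free, and then reduce injectivity to an invertibility statement for a classical L-function viewed as a multiplier on $\cS'$. The starting observation is that every $f \in S(F_v^\times)$ decays faster than any polynomial at $0 \in F_v$, because the bound on $\abs{\chi(y) f(y)}$ is required for \emph{every} multiplicative character $\chi$ (including $\chi(y) = \abs{y}^{-N}$). Consequently, extension by zero defines an $\cS$-module map $\iota \co \cS \ra \cL$; unfolding the formulas shows that the composition $\cS \xrightarrow{\iota} \cL \ra \cS'$ agrees with the natural inclusion $\cS \hookrightarrow \cS'$, since both send $f$ to $g \mapsto \sum_{q \in F^\times} f(qg)$.

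Since $\cS'$ is smooth over $\cS$, the multiplication map $\mu \co \cS' \otimes_\cS \cS \xrightarrow{\sim} \cS'$ is an isomorphism, and the above factorization decomposes it as $\cS' \otimes_\cS \cS \xrightarrow{1 \otimes \iota} \cS' \otimes_\cS \cL \to \cS'$. This already exhibits our map $\cS' \otimes_\cS \cL \to \cS'$ as split surjective; only injectivity remains.

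For injectivity, I would invoke the (a priori non-canonical) isomorphism $t \co \cS \xrightarrow{\sim} \cL$ of $\cS$-modules, asserted in Section~\ref{sect:module_L} and proved in Appendix~\ref{app:non_canonical_triv}. Combining $t$ with $\mu$ turns the map in question into an $\cS'$-linear endomorphism $\psi$ of $\cS'$. Because $\cS$ has an approximate identity and $\cS'$ is commutative and smooth over $\cS$, any such endomorphism is multiplication by a uniquely determined element $L_t$ in the multiplier algebra of $\cS'$, namely the limit of the values of $\psi$ on the approximate identity. A Mellin-transform analysis of Construction~\ref{const:L_into_S_prime} identifies $L_t$, on each component of $\Chars(F)$, with a classical complete Hecke L-function of the associated Hecke character. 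By the Euler product, $L_t$ is holomorphic and non-vanishing on the right half-plane cut out by $\cS'$ (cf.\ Remark~\ref{remark:S_prime_paley_wiener_in_lines}), with $1/L_t$ bounded of polynomial growth in vertical strips; multiplication by $1/L_t$ therefore preserves the rapid decay condition defining $\cS'$ and serves as an explicit inverse to $\psi$.

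The main obstacle is the concrete identification of $\psi$ with multiplication by a bona fide L-function, and the analytic verification that $1/L_t$ acts on $\cS'$ (which is subtler when $F$ is not totally real, since the description of $\cS'$ is less transparent there). A secondary concern is logical ordering: Appendix~\ref{app:non_canonical_triv} must be proved independently of this theorem and of the generators-to-L-functions correspondence (Construction~\ref{const:gens_are_L_funcs}), or else the argument above becomes circular.
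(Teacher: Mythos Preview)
Your surjectivity argument contains a genuine gap: the map $\iota\co\cS\ra\cL$ given by ``extension by zero'' does not exist. It is true that each local inclusion $S(F_v^\times)\hookrightarrow S(F_v)$ is well-defined (your decay observation handles the Archimedean places, and compact support away from $0$ handles the finite ones), but these local maps do not assemble into a global map $S(\AA^\times)\ra S(\AA)$. The obstruction is the restricted tensor product: the distinguished vector on the source side is $\one_{\O_v^\times}$, whereas on the target side it is $\one_{\O_v}$, and extension by zero sends $\one_{\O_v^\times}$ to itself rather than to $\one_{\O_v}$. A pure tensor in $S(\AA^\times)$, equal to $\one_{\O_v^\times}$ at almost all $v$, is therefore sent to something equal to $\one_{\O_v^\times}$ at almost all $v$, and this is not an element of $S(\AA)$.

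The paper's proof confronts exactly this point. It uses the local inclusions $\cS_v\hookrightarrow\cL_v$ to produce local inverse maps $\cS_v\ra\cS_v'\otimes_{\cS_v}\cL_v$, and then checks that these globalize: the distinguished vector $\one_{\O_v^\times}$ is sent to $(\one_{\O_v^\times}-\one_{\pi_v\O_v^\times})\otimes\one_{\O_v}$, and the infinite product $\prod_v(\one_{\O_v^\times}-\one_{\pi_v\O_v^\times})$ converges in $\cS'$. This convergence is precisely the absolute convergence of the Euler product for $\Re(s)>1$. Your proposed section, if it existed, would amount to this product already converging in $\cS$ itself, which it does not; the passage to $\cS'$ is not optional.

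Your injectivity strategy via the generator $t$ of Appendix~\ref{app:non_canonical_triv} is closer to viable, and you correctly flag the logical-ordering concern. But note that carrying it out still requires identifying the resulting $\cS'$-endomorphism as multiplication by an explicit Euler product and verifying that its inverse acts boundedly on $\cS'$; this recapitulates the same place-by-place analysis and convergence check that the paper performs directly. You must also avoid routing the identification through Construction~\ref{const:gens_are_L_funcs}, which presupposes the theorem under proof.
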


\begin{remark}
    In more classical terms, the content of Theorem~\ref{thm:S_prime_L_isom} is that the L-functions we are constructing have a zero-free right-half-plane.
\end{remark}

The proof of Theorem~\ref{thm:S_prime_L_isom} is fairly heavy, and will be postponed to the end of this section.

At this point, we have all of the constructions necessary for our notion of L-function. We claim that the module $\cL$, together with the canonical embedding $\cL\ra\cS'$, encodes the classical notion of L-function. We formalize this statement as the following construction:
\begin{construction} \label{const:gens_are_L_funcs}
    Suppose that we are given some (non-canonical) isomorphism
    \[
        t\co\cS\xrightarrow{\sim}\cL.
    \]
    Applying the functor $\cS'\otimes_\cS-$, we get a morphism of modules
    \[
        t\co\cS'\xrightarrow{\sim}\cS'.
    \]
    Recall that this is sometimes called an element of the \emph{roughening} of $\cS'$. After composing with the Mellin transform from both sides, as in Remark~\ref{remark:S_prime_paley_wiener_in_lines}, the map $t$ acts as multiplication by a function $L_t$ on some right-half-plane of the space $\Chars(F)$. We refer to $L_t$ as the \emph{L-function corresponding to $t$}.
\end{construction}

\begin{remark}
    Construction~\ref{const:gens_are_L_funcs} is not vacuous. Specifically, Theorem~\ref{thm:L_is_triv} guarantees the existence of a generator $t\co\cS\xrightarrow{\sim}\cL$ as above.
\end{remark}

\begin{corollary} \label{cor:L_inj_into_S_prime}
    The morphism $\cL\ra\cS'$ is injective.
\end{corollary}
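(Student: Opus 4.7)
The plan is to combine Theorem~\ref{thm:L_is_triv} (existence of a generator $t\co\cS\xrightarrow{\sim}\cL$) with Theorem~\ref{thm:S_prime_L_isom} to reduce the corollary to the already-noted injectivity of the inclusion $\cS\hookrightarrow\cS'$. Denote the map of Construction~\ref{const:L_into_S_prime} by $\phi\co\cL\ra\cS'$. Since $t$ is an isomorphism, it suffices to prove injectivity of the composition $\phi\circ t\co\cS\ra\cS'$.

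Next, I would exploit the naturality of the localization functor $V\mapsto\cS'\otimes_\cS V$ (Remark~\ref{remark:S_prime_is_localization}) together with its natural transformation from the identity. Naturality applied to the $\cS$-linear map $\phi\circ t$ yields a commutative square whose top row is $\phi\circ t\co\cS\ra\cS'$, whose bottom row is its base change $\cS'\otimes_\cS\cS\ra\cS'\otimes_\cS\cS'$, and whose vertical arrows are the structure maps of the natural transformation. Using quasi-unitality of $\cS$ and the smoothness of $\cS'$ as an $\cS$-module, one identifies $\cS'\otimes_\cS\cS\cong\cS'$ via the multiplication map, under which the left vertical arrow becomes the inclusion $\cS\hookrightarrow\cS'$; by Remark~\ref{remark:S_prime_is_localization}, the right vertical arrow becomes the identity on $\cS'$. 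Under these identifications the bottom row of the square coincides, via $t$, with the isomorphism provided by Theorem~\ref{thm:S_prime_L_isom}. Hence $\phi\circ t$ factors as
\[
    \cS\hookrightarrow\cS'\xrightarrow{\sim}\cS',
\]
an injection followed by an isomorphism, so $\phi\circ t$ is injective, and therefore $\phi$ is injective.

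The only non-trivial bookkeeping is verifying that the canonical map $\cS\ra\cS'\otimes_\cS\cS$ coincides with the ring inclusion $\cS\hookrightarrow\cS'$ after applying the multiplication identification. This is a diagram chase relying on quasi-unitality of $\cS$, smoothness of $\cS'$ over $\cS$, and the multiplication-induced isomorphism $\cS'\otimes_\cS\cS'\xrightarrow{\sim}\cS'$ of Remark~\ref{remark:S_prime_is_localization}. Once this compatibility is in place, the remainder of the argument is purely formal, and the appeal to Theorem~\ref{thm:L_is_triv} (proved in the appendix) is the essential non-elementary input.
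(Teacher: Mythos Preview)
Your argument is correct and is essentially the same as the paper's: both pick a generator $t\co\cS\xrightarrow{\sim}\cL$ via Theorem~\ref{thm:L_is_triv}, invoke Theorem~\ref{thm:S_prime_L_isom} to see that the base-changed map is an isomorphism, and then deduce injectivity of $\cL\ra\cS'$ from the known injectivity of $\cS\hookrightarrow\cS'$ by a commutative-diagram chase. The only cosmetic difference is that the paper draws the square with $t$ as the vertical arrows and the maps to $\cS'$ as the rows, whereas you transpose this and take $\phi\circ t$ as the top row; your citation of Remark~\ref{remark:S_prime_is_localization} for the right vertical being the identity is slightly imprecise (that remark concerns the completed tensor product), but the needed fact that $\cS'\to\cS'\otimes_\cS\cS'\xrightarrow{\mathrm{mult}}\cS'$ is the identity follows already from smoothness of $\cS'$ over $\cS$.
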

\begin{proof}
    Pick an isomorphism $t\co\cS\xrightarrow{\sim}\cL$, as in Theorem~\ref{thm:L_is_triv}. Then we get a commutative diagram:
    \[\xymatrix{
        \cS \ar[d]^t & \Schw\otimes_\Schw\cS \ar[l]_-\sim \ar[r] & \cS'\otimes_\cS\cS \ar[d]^{\cS'\otimes_\cS\,\displaystyle t} \ar[r]^-\sim & \cS' \ar@{-->}[d]^{L_t} \\
        \cL & \Schw\otimes_\Schw\cL \ar[l]_-\sim \ar[r] & \cS'\otimes_\cS\cL \ar[r]^-\sim & \cS'.
    }\]
    By Theorem~\ref{thm:S_prime_L_isom}, we can extend $\cS'\otimes_\cS t$ to an isomorphism $\xymatrix@1{\cS' \ar@{-->}[r]^{L_t} & \cS'}$. Since the top row of the diagram is an injective morphism, so is the bottom row.
\end{proof}

\begin{remark}
    Let $t\co\cS\xrightarrow{\sim}\cL$ be some generator for $\cL$. Then $L_t$ is an automorphism of $\cS'$. In particular, it has no zeroes with $\Re{s}>1$. Moreover, it satisfies some moderate growth condition in vertical strips.
    
    We note that this L-function is well defined up to composing $t$ with an automorphism of $\cS$ as a module over itself. I.e., up to the roughening of $\cS$. This makes the L-function $L_t$ well-defined up to multiplication by an entire function such that both it and its inverse have no zeroes, and satisfy a similar moderate growth condition in vertical strips.
\end{remark}

\begin{remark} \label{remark:growth_of_L_in_vertical_strips}
    One should note that the above is slightly stronger than the corresponding classical claim. That is, when the L-function $\Lambda(\chi,s)$ is defined via the GCD procedure, it is only well-defined up to multiplication by an entire function with no zeroes. I.e., the moderate growth condition is absent.
    
    The author finds this interesting. Usually, when one considers growth conditions on, say, the Riemann zeta function, one takes the non-completed zeta function $\zeta(s)$. The reason for this is that the completed zeta function
    \[
        \Lambda(s)=\pi^{-\frac{s}{2}}\Gamma\left(\frac{s}{2}\right)\zeta(s)
    \]
    decreases very quickly in vertical strips, due to the presence of the gamma factor.
    
    In other words, the (interesting) growth behavior of the L-function in vertical strips is actually not a well-defined feature of the L-function when it is defined via the GCD procedure. However, with the new formalism, a statement of the form ``the L-function grows slowly in vertical strips'' makes sense without modifying the definition of the L-function. We will discuss this further in Appendix~\ref{app:non_canonical_triv}.
\end{remark}

\begin{remark}
    It is also possible to redo the above definitions based on different variations of the ring $S(\AA^\times)$, and therefore $\cS$. If, for instance, one chooses to loosen the smoothness requirements, this would result in Mellin transforms that are required to decrease in vertical strips, but not very quickly. That is, instead of rapidly decreasing in vertical strips, our functions would be required to decrease more slowly (say, at some fixed polynomial rate).
    
    In turn, this would make the roughening of $\cS$ smaller. The implication of that would be that the L-function's growth in vertical strips is more strictly controlled. This would make questions of Lindel\"of type well-defined.
\end{remark}

\begin{remark} \label{remark:almost_smooth}
    Another way to modify the definition uses something similar to the ``almost smooth'' functions of Appendix~A of \cite{almost_smooth_padic}. These functions are a natural modification of the notion of a smooth function on the $p$-adic part of $\AA^\times$. In brief: instead of taking locally constant functions, one takes functions whose Fourier coefficients are rapidly decreasing. Using a definition based on almost smooth functions, rather than smooth functions, should give a notion of L-function with well defined growth properties not only in the complex $\pm i\infty$ direction, but also in the conductor aspect. This generalization will be explored elsewhere.
\end{remark}

Let us return to the proof of Theorem~\ref{thm:S_prime_L_isom}.
\begin{proof}[Proof of Theorem~\ref{thm:S_prime_L_isom}]
    We have a morphism
    \[\xymatrix{
        \cS'\otimes_{\cS}\cL \ar[r] & \cS',
    }\]
    and would like to construct an inverse. Such an inverse can always be induced from a map of bornological $\cS$-modules,
    \[\xymatrix{
        \cS \ar@{-->}[r] & \cS'\otimes_{\cS}\cL.
    }\]
    
    Our strategy for doing so will work as follows. We will construct maps
    \begin{equation} \label{eq:local_canonical_trivs}\xymatrix{
        S(F_v^\times) \ar[r] & \cS'\otimes_{S(F_v^\times)} S(F_v)
    }\end{equation}
    for all places $v$, corresponding to a generator $t_v\co S(F_v^\times)\ra S(F_v)$ tensored with its inverse. Since the product defining an L-function absolutely converges in the right-half-plane of $\Chars(F)$ corresponding to $\cS'$, these maps will multiply together to yield the desired map
    \[\xymatrix{
        \cS \ar[r] & \cS'\otimes_{\cS}\cL.
    }\]
    
    Indeed, we define $\cS_v=S(F_v^\times)$ and $\cL_v=S(F_v)$. We also define a local variant of the ring $\cS'$, as follows. Set the ring extension $\cS_v'$ of $\cS_v$ to be the G\r{a}rding space of the space of functions $f$ on $F_v^\times$ such that there is a bound:
    \[
        \int_{F_v^\times}\abs{g}_v^{1+\varepsilon}\max\{1,\abs{g}_v\}^\sigma\cdot\abs{f(g)}\dtimes{g}<\infty
    \]
    for all $\sigma<\infty$, $\varepsilon>0$. We also require that $f$ be supported on a compact subset of $F_v$ when $v$ is finite.
    
    That is, when $v$ is finite, then $\cS_v'$ is the space of locally constant functions on $F_v^\times$, supported on a compact subset of $F_v$, which do not increase much faster than $\abs{g}^{-1}$ near $0$. When $v$ is infinite, then $\cS_v'$ is the space of smooth functions on $F_v^\times$ which are rapidly decreasing near $\infty$ and do not increase much faster than $\abs{g}^{-1}$ near $0$.
    
    The reader should note that in the case where $F_v$ is non-Archimedean, the term ``G\r{a}rding space'' simply refers to the subspace of functions on which the action of $F_v^\times$ is locally constant. This is always the case when $F$ is a function field.
    
    Now, our claim is that the natural inclusion $\cS_v\hookrightarrow\cL_v$ induces an isomorphism
    \[\xymatrix{
        \cS_v' \ar[r]^-\sim & \cS'_v\otimes_{\cS_v}\cL_v.
    }\]
    Indeed, the quotient $\cL_v/\cS_v$ is killed by any inverse of the local L-function at $v$, which is invertible in $\cS_v'$.
    
    So, composing the above with the map $\cS_v\ra\cS_v'$ gives the desired local maps of~\eqref{eq:local_canonical_trivs}. Thus, it remains to show the convergence property we are after. Specifically, we claim that for all finite places $v$, the image of the distinguished vector $\one_{\O_v^\times}$ is the element
    \[
        (\one_{\O_v^\times}-\one_{\pi_v\O_v^\times})\otimes\one_{\O_v}\in\cS_v'\otimes_{\cS_v}\cL_v,
    \]
    where $\pi_v$ is a uniformizer. Thus, it is enough to check that the product
    \[
        \prod_v (\one_{\O_v^\times}-\one_{\pi_v\O_v^\times})
    \]
    converges in $\cS'$, which indeed holds.
\end{proof}

\section{Functional Equation} \label{sect:functional_equation}

Our goal for this section is to provide a notion of functional equation and analytic continuation that is compatible with our formalism. In terms of the analogy of Remark~\ref{remark:L_is_divisor}, we are showing that the ``divisor'' corresponding to $\cL$ is symmetric.

To do this, we will need to examine the operation of Fourier transform on $\cL$ and its interaction with the embedding of $\cL$ in the space of functions on $\AA^\times/F^\times$. It will turn out that the compatibility (or lack thereof) of this embedding with the Fourier transform is deeply related to the poles of the L-function. We will thoroughly delve into this issue in the following two subsections.

For the moment, let us give some notation, and provide some bottom lines. We begin by defining a ring involution $\iota\co\cS\ra\cS$ by:
\[
    \iota(f)(g)=\abs{g}^{-1}f(g^{-1}).
\]
For an $\cS$-module $M$, we will denote by $\prescript{\iota}{}{M}$ the twist of $M$ by $\iota$.

\begin{construction} \label{const:fourier}
    We define the isomorphism
    \[
        \F\co S(\AA)\xrightarrow{\sim}\prescript{\iota}{}{S(\AA)}.
    \]
    to be given by the Fourier transform. This isomorphism depends on a choice of additive character $\psi\co\AA/F\ra\CC^\times$. By taking co-invariants, we also obtain an isomorphism:
    \[
        \F\co\cL\xrightarrow{\sim}\prescript{\iota}{}{\cL},
    \]
    which no longer depends on the choice of $\psi$.
\end{construction}

This means that we have two spaces, $\cS$ and $\cL$, each armed with an involution, $\iota$ and $\F$ respectively. These two spaces are embedded in a bigger space $\cS'$ together. It turns out that the involutions $\iota$ and $\F$ \emph{coincide} on the intersection $\cS\cap\cL$ inside $\cS'$. In particular, one can define a pushout diagram of $\Schw$-modules:
\begin{equation} \label{eq:S_ext_pushout} \xymatrix{
    \cS\cap\cL \ar[r] \ar[d] & \cL \ar[d] \\
    \cS \ar[r] & \cS+\cL=\cS_\ext,
}\end{equation}
where all of the spaces carry compatible involutions.

Let us take a moment to delve into the intuitive meaning of the intersection $\cS\cap\cL$. Under the Mellin transform, $\cS$ should consist of entire functions on $\Chars(F)$ that are rapidly decreasing in vertical strips (satisfying some extra conditions). Similarly, $\cL$ should consist of such functions multiplied by the L-function on $\Chars(F)$. In particular, $\cS\cap\cL$ should consist of all functions that have the same zeroes as the L-function, no poles, and satisfy some growth conditions. This means that we expect that the intersection $\cS\cap\cL$ is large, while the quotient $\P=\cL/\cS\cap\cL$ is small, and is related to the poles of the L-function.

So, take for granted for the moment that we know that the quotient $\P=\cL/\cS\cap\cL=\cS_\ext/\cS$ is sufficiently ``small''. Then we are able to interpret the above diagram~\eqref{eq:S_ext_pushout} as giving a functional equation and analytic continuation for L-functions: the extension $\cS_\ext$ of $\cS$ is essentially the space of meromorphic functions with a small number of prescribed poles, and the map $\cL\ra\cS_\ext$ which sends a test function to its corresponding zeta integral respects the involutions on both sides.

Hence, our problem is reduced to proving some smallness bounds on $\P$. Na\"ively, this sounds hard. The quotient $\cS/\cS\cap\cL$ contains all information about the zeroes of the L-function, and so an explanation is needed as to why the seemingly similar quotient $\P=\cL/\cS\cap\cL$ is so much easier to characterize.

As it turns out, we are able to provide a surprisingly \emph{conceptual} reason for why the poles of the L-function are easier to study. Specifically, we embed $\cL$ in a space $\widetilde{\cS}$, which is somewhat larger than $\cS'$, but carries its own involution. We then show that the difference between the involutions of $\cL$ and $\widetilde{\cS}$ precisely captures the polar divisor $\P$. This allows us to isolate the desired quotient for study in a canonical way.

The structure of the rest of this section is as follows. In Subsection~\ref{subsect:analytic_cont_func_eq}, we will formalize the above explanation about the various involutions involved. We will introduce a slightly different definition for $\P$, although it will turn out to be equivalent. In Subsection~\ref{subsect:polar_div}, we will study the properties of the polar divisor $\P$, and prove that it is the same as $\cL/\cS\cap\cL$.

\subsection{Analytic Continuation and the Polar Divisor} \label{subsect:analytic_cont_func_eq}

In order to be able to state a re-interpretation of the functional equation, we first need to change the target space for the map $\cL\ra\cS'$. The reason for that is that the space $\cS'$ is just too small; it is not symmetric under $\iota$. Instead, we define a new target space $\widetilde{\cS}$. It will have the advantage of being symmetric under $\iota$, but it will no longer be a ring. This will allow us to compare $\iota$ with $\F$.

\begin{construction} \label{const:schw_prime_prime}
    Define the $\cS$-module $\widetilde{\cS}$ to be the G\r{a}rding space (i.e., the space of the smooth vectors) of the space of functions $f$ on $\AA^\times/F^\times$ that are of moderate growth, with its natural bornology.
\end{construction}
\begin{remark}
    The space $\widetilde{\cS}$ is an adelic version of the space $A_\text{umg}(\Gamma\backslash G)$ of functions of uniform moderate growth (c.f. \cite{schwartz_of_aut_quotient}).
\end{remark}
\begin{remark}
    The bornological space $\widetilde{\cS}$ is complete, and by Theorem~\ref{thm:adelic_garding_is_smooth} it is also smooth over $\cS$.
    
    In fact, it is precisely the smoothening of the dual of $\cS$. I.e, $\widetilde{\cS}$ is the \emph{contragradient} of $\cS$.
\end{remark}
\begin{remark}
    In the case where $F$ is taken to be a function field, then no bornologies are necessary. One can directly define $\widetilde{\cS}$ to be the contragradient of $\cS$, in the sense of being the space of locally constant functions on $\AA^\times/F^\times$ with no conditions on growth.
\end{remark}

\begin{remark}
    The isomorphism $\iota\co\cS\xrightarrow{\sim}\prescript{\iota}{}{\cS}$ extends to an isomorphism
    \[
        \iota\co\widetilde{\cS}\xrightarrow{\sim}\prescript{\iota}{}{\widetilde{\cS}}
    \]
    via the same formula
    \[
        \iota(f)(g)=\abs{g}^{-1}f(g^{-1}).
    \]
\end{remark}

\begin{construction}
    We define a map $\cL\ra\widetilde{\cS}$ via the composition
    \[
        \cL\ra\cS'\ra\widetilde{\cS}.
    \]
\end{construction}

Given this embedding, one may naturally ask about the relation between the two involutions $\F\co\cL\ra\prescript{\iota}{}{\cL}$ and $\iota\co\widetilde{\cS}\ra\prescript{\iota}{}{\widetilde{\cS}}$. As it turns out, they do not commute, and this lack of commutativity reflects the poles of the L-function.
\begin{definition}
    We let $\delta\co\cL\ra\widetilde{\cS}$ denote the difference between the two maps in the diagram:
    \[\xymatrix{
        \cL \ar[r]_j \ar[d]^\F & \widetilde{\cS} \ar[d]^\iota \ar@{=>}[dl]^{\iota\circ\delta} \\
        \prescript{\iota}{}{\cL} \ar[r]_{\iota(j)} & \prescript{\iota}{}{\widetilde{\cS}}.
    }\]
    
    In other words, we have $\delta=j-\iota\circ\iota(j)\circ\F$, where $j\co\cL\ra\widetilde{\cS}$ is the embedding.
\end{definition}

Our claim is that the map $\delta\co\cL\ra\widetilde{\cS}$ precisely presents the \emph{poles} of our L-function. In the divisor interpretation, this means that it presents the positive part of the formal difference $[\cL]-[\cS]$. In other words, let us define:
\begin{definition}
    Define the bornological $\cS$-module $\P$ to be the co-kernel
    \[\xymatrix{
        0 \ar[r] & \ker{\delta} \ar[r] & \cL \ar[r] & \P \ar[r] & 0.
    }\]
    We refer to $\P$ as the \emph{polar divisor}.
\end{definition}
This coincides with the interpretation as the positive part of the difference $[\cL]-[\cS]$ via:
\begin{proposition} \label{prop:kernel_of_delta}
    The kernel $\ker{\delta}$ is equal to the intersection $\cS\cap\cL$ in $\widetilde{\cS}$.
\end{proposition}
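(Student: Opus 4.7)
The plan is to compute $\delta$ explicitly via adelic Poisson summation, and then to match $\ker\delta$ with $\cS\cap\cL$ via the Paley--Wiener description of $\cS$. For a representative $f\in S(\AA)$ of a class $[f]\in\cL$, unwinding the definitions gives
\[
    \delta(f)(g) = \sum_{q\in F^\times}f(qg) - \abs{g}^{-1}\sum_{q\in F^\times}\hat f(q/g),
\]
where $\hat f$ denotes the adelic Fourier transform. The classical adelic Poisson summation $\sum_{q\in F}f(qg) = \abs{g}^{-1}\sum_{q\in F}\hat f(q/g)$ isolates the zeroth terms on each side and collapses the difference to
\[
    \delta(f)(g) = \abs{g}^{-1}\hat f(0) - f(0).
\]
Both $f(0)$ and $\hat f(0)=\int_\AA f\,dx$ descend to well-defined functionals on $\cL$: the first because $F^\times$ fixes $0\in\AA$, and the second by the product formula. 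Because the constant $1$ and the character $\abs{\cdot}^{-1}$ are linearly independent on $\AA^\times/F^\times$, the vanishing $\delta([f])=0$ in $\widetilde{\cS}$ is equivalent to the pair of conditions $f(0)=0$ and $\hat f(0)=0$.

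For $\cS\cap\cL\subseteq\ker\delta$, I would use Remark~\ref{remark:paley_wiener}, which identifies $\cS$ (via Mellin) with the Paley--Wiener space of entire functions on $\Chars(F)$ rapidly decreasing in vertical strips; equivalently, the image of $\cS\ra\widetilde{\cS}$ consists of Bruhat--Schwartz functions on $\AA^\times/F^\times$, decaying rapidly as $\abs{g}\to 0$. Rewriting Poisson as
\[
    j(f)(g) = \abs{g}^{-1}\hat f(0) - f(0) + \abs{g}^{-1}\sum_{q\in F^\times}\hat f(q/g),
\]
the tail sum decays rapidly as $\abs{g}\to 0$, since $\hat f$ is Schwartz on $\AA$ and the points $q/g$ escape to infinity. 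Hence if $[f]\in\cS\cap\cL$, then $j(f)$ itself decays rapidly at $0$, which forces the $\abs{g}^{-1}$- and constant-coefficients to vanish, i.e., $\hat f(0)=f(0)=0$.

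For the reverse inclusion $\ker\delta\subseteq\cS\cap\cL$, assume $f(0)=\hat f(0)=0$. The Mellin transform of $j(f)$ along the component of $\Chars(F)$ through a unitary $\chi_0$ equals, by unfolding, the classical Tate zeta integral $\int_{\AA^\times}f(y)\chi_0(y)\abs{y}^s\dtimes{y}$. Tate's thesis provides a meromorphic continuation to all of $\CC$ with at most simple poles at $s=0$ and $s=1$ (and only when $\chi_0$ is trivial on $\AA^1/F^\times$), whose residues are nonzero constants times $f(0)$ and $\hat f(0)$ respectively. Our hypothesis kills both residues, so the continuation is entire on every component, and rapid decay in vertical strips follows from standard integration-by-parts applied to the Schwartz function $f$. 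Applying Paley--Wiener in the other direction identifies $j(f)$ with the image of an element of $\cS$ inside $\widetilde{\cS}$, so $[f]\in\cS\cap\cL$. The main technical obstacle I expect is checking that the Paley--Wiener identification of $\cS$ is compatible with the bornological framework used throughout the paper, so that Mellin-side entireness really does translate back to membership in the image of $\cS\ra\widetilde{\cS}$; once this is in place, the rest reduces to the classical residue computation from Tate.
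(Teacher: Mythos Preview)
Your argument is correct and shares with the paper the explicit Poisson-summation computation of $\delta$, but the two inclusions are handled quite differently. For $\ker\delta\subseteq\cS\cap\cL$, the paper gives a one-line argument: if $\delta(f)=0$ then $j(f)=\iota\bigl(\iota(j)(\F f)\bigr)$ lies in both $\cS'$ and $\iota(\cS')$, and $\cS'\cap\iota(\cS')\subseteq\cS$ directly from the definitions of the growth conditions. This bypasses your invocation of Tate's analytic continuation and Paley--Wiener entirely, and in particular sidesteps the totally-real restriction you flagged at the end. For the reverse inclusion $\cS\cap\cL\subseteq\ker\delta$, the paper argues abstractly: it shows that $\delta|_{\cS\cap\cL}$ factors through $\prescript{\iota}{}{\cS'}$, so $(\cS\cap\cL)/\ker\delta$ injects into $\prescript{\iota}{}{\cS'}$; but this quotient is a submodule of $\P\cong\one\oplus\prescript{\iota}{}{\one}$, which is ``torsion'', while $\cS'$ is visibly ``torsion-free'', forcing the quotient to vanish. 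Your asymptotic analysis at $\abs{g}\to 0$ is more concrete and arguably cleaner for this direction. In short, your route is the classical analytic one through Tate's thesis, while the paper keeps the argument intrinsic to the module-theoretic framework it has set up; the paper's approach buys uniformity over all number fields and independence from any Paley--Wiener identification, whereas yours makes the connection to the classical residue picture fully explicit.
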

\begin{remark} \label{remark:kernel_of_delta}
    The inclusion $\ker{\delta}\subseteq\cS\cap\cL$ is easy to see. Indeed, note that:
    \[
        \ker{\delta}\subseteq{\cS'}\cap\iota({\cS'}).
    \]
    However, it is clear from the definition that
    \[
        {\cS'}\cap\iota({\cS'})\subseteq\cS.
    \]
\end{remark}

We will postpone the proof of the rest of Proposition~\ref{prop:kernel_of_delta} to the next subsection.

Before we discuss the specific properties of $\P$, let us discuss its relation to the analytic continuation property and the functional equation. Informally, the idea is as follows. We are looking for a space of meromorphic functions (under some Paley-Wiener correspondence) which is simultaneously big enough to contain $\cL$, and is symmetric with respect to $\iota$. Given such a space, we would be able to ask if its involution $\iota$ is compatible with the involution $\F$ of $\cL$. That would be our functional equation.

Using the space $\widetilde{\cS}$ above fails on two counts: its involution is not compatible with $\F$, and it is too ``big'' to be thought of as a space of meromorphic functions. Therefore, we can ask for some minimal space $\cS_\ext$ satisfying our requirements.

It turns out that we can safely choose $\cS_\ext$ to be the pushout
\[\xymatrix{
    \ker{\delta} \ar[d] \ar[r] & \cL \ar@{-->}[d] \\
    \cS \ar@{-->}[r] & \cS_\ext.
}\]
This space automatically contains both $\cS$ and $\cL$, and has an involution
\[
    \cS_\ext\xrightarrow{\sim}\prescript{\iota}{}{\cS_\ext}
\]
compatible with both $\iota\co\cS\ra\prescript{\iota}{}{\cS}$ and $\F\co\cL\ra\prescript{\iota}{}{\cL}$. Moreover, clearly $\cS_\ext$ is an extension of $\cS$ by $\P$. Thus, if we had a strong enough grip on the support of $\P$, we would immediately be able to interpret $\cS_\ext$ as a space of meromorphic functions with prescribed poles, and see that the image of $\cL$ in $\cS_\ext$ satisfies a functional equation.

\begin{remark} \label{remark:kernel_delta_means_S_ext_in_S_prime}
    The essence of Proposition~\ref{prop:kernel_of_delta} is now that $\cS_\ext$ can be identified with the sum $\cS+\cL$ inside $\widetilde{\cS}$. I.e., that the two inversions $\iota$, $\F$ can be extended to the sum $\cS+\cL$ in a compatible manner. Alternatively, since both $\cS$ and $\cL$ lie inside ${\cS'}$, the proposition can be interpreted as meaning that the natural map
    \[
        \cS_\ext\ra{\cS'}
    \]
    is injective.
    
    Since we are informally thinking of ${\cS'}$ as a space of holomorphic functions on some right-half-plane, Proposition~\ref{prop:kernel_of_delta} should be interpreted as meaning that the functional equation carries no ``extra'' data in its L-functions that cannot be analytically continued from the right-half-plane. That is, a-priori it could have been possible that the correct notion of L-function contained a delta distribution on the complex plane (which would be invisible to analytic continuation). The proposition excludes this possibility.
\end{remark}

\subsection{Properties of the Polar Divisor \texorpdfstring{$\P$}{P}} \label{subsect:polar_div}

Let us now start studying the properties of $\P$. Our goal is to show that its support is sufficiently small that the space $\cS_\ext$ of the previous subsection corresponds to meromorphic functions on the whole of $\CC$. Finally, we will use this to prove Proposition~\ref{prop:kernel_of_delta}.

\begin{remark} \label{remark:P_is_smooth}
    We first note that $\P$ is smooth, as it is the quotient of a smooth module by a closed subspace. This follows from Corollary~5.6 of \cite{dixmier_malliavin_for_born_arxiv}.
\end{remark}

\begin{remark}
    In fact, using Proposition~\ref{prop:formula_for_delta}, we will see that we know exactly what $\P$ is in this case. It turns out to be $2$-dimensional, and in particular the bornological $\cS$-module $\P$ is complete.
\end{remark}

\begin{remark}
    As our next step in characterizing $\P$, we note that we must have a canonical isomorphism:
    \[
        \P\xrightarrow{\sim}\prescript{\iota}{}{\P},
    \]
    induced by $\F$. That is, the polar divisor is symmetric about the critical strip. This follows because
    \[
        \iota\circ\delta\circ\F=-\delta,
    \]
    and thus $\ker{\delta}$ is preserved by $\F$.
\end{remark}

In order to make any further progress, we will need to make use of the following formula for $\delta$, which is an incarnation of the Poisson summation formula:
\begin{proposition} \label{prop:formula_for_delta}
    The function $\delta\co\cL\ra\widetilde{\cS}$ is given by the formula:
    \[
        \delta(\Psi)(g)=(\F\Psi)(0)\abs{g}^{-1}-\Psi(0).
    \]
\end{proposition}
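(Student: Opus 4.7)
The plan is to unwind the definition of $\delta$ in coordinates, and then apply the adelic Poisson summation formula. The formula we want is essentially a rearrangement of Poisson summation in which the ``$q=0$'' terms on both sides are isolated.

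First I would substitute the explicit formula for the embedding $j\co\cL\ra\widetilde{\cS}$ from Construction~\ref{const:L_into_S_prime}, namely $j(\Psi)(g)=\sum_{q\in F^\times}\Psi(qg)$, together with the formula $\iota(h)(g)=\abs{g}^{-1}h(g^{-1})$. Since $\iota(j)\co\prescript{\iota}{}{\cL}\ra\prescript{\iota}{}{\widetilde{\cS}}$ is the same map as $j$ between the underlying vector spaces, a direct computation gives
\[
    \bigl(\iota\circ\iota(j)\circ\F\bigr)(\Psi)(g)=\abs{g}^{-1}\sum_{q\in F^\times}(\F\Psi)(qg^{-1}).
\]
Subtracting from $j(\Psi)(g)$ yields
\[
    \delta(\Psi)(g)=\sum_{q\in F^\times}\Psi(qg)-\abs{g}^{-1}\sum_{q\in F^\times}(\F\Psi)(qg^{-1}).
\]

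Next I would invoke the adelic Poisson summation formula for $F\subset\AA$: for every $\Psi\in S(\AA)$ and every $g\in\AA^\times$, applying Poisson to the scaled function $\Psi_g(x)=\Psi(gx)$ and using $\F\Psi_g(y)=\abs{g}^{-1}\F\Psi(g^{-1}y)$ gives
\[
    \sum_{q\in F}\Psi(qg)=\abs{g}^{-1}\sum_{q\in F}(\F\Psi)(qg^{-1}).
\]
Separating the $q=0$ terms on both sides produces
\[
    \Psi(0)+\sum_{q\in F^\times}\Psi(qg)=\abs{g}^{-1}(\F\Psi)(0)+\abs{g}^{-1}\sum_{q\in F^\times}(\F\Psi)(qg^{-1}),
\]
which on rearrangement matches the previous displayed expression for $\delta(\Psi)(g)$ and immediately yields the desired identity $\delta(\Psi)(g)=(\F\Psi)(0)\abs{g}^{-1}-\Psi(0)$.

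There is essentially no hard step here: the Poisson summation formula does all the work, and the only obstacle is the bookkeeping around the twist functor $\prescript{\iota}{}{(-)}$ and verifying that one is applying $\iota$ in the correct direction so that $\delta$ lands back in $\widetilde{\cS}$ (rather than its twist). Once this is carefully tracked, the calculation is a one-line consequence of splitting off the $q=0$ contribution from each side of Poisson summation, and one sees that the failure of $j$ to intertwine $\F$ with $\iota$ is precisely measured by the two ``missing'' terms $\Psi(0)$ and $(\F\Psi)(0)$---which is exactly the statement that $\delta$ detects the polar behaviour of the L-function.
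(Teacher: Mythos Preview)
Your proof is correct and is exactly the argument the paper has in mind: the paper does not spell out a proof but simply introduces the proposition as ``an incarnation of the Poisson summation formula,'' and your computation is precisely that incarnation made explicit. The bookkeeping with $\iota$ and $\iota(j)$ is handled correctly, and the key step---splitting off the $q=0$ terms from both sides of the adelic Poisson summation formula---is the whole content of the statement.
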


This will allow us to know exactly where the poles $\P$ are, and to prove Proposition~\ref{prop:kernel_of_delta}.

It is now clear exactly what $\P$ is:
\begin{corollary} \label{cor:supp_of_polar}
    There is an isomorphism:
    \begin{equation*}
        \one\oplus\prescript{\iota}{}{\one}\xrightarrow{\sim} \P,
    \end{equation*}
    where $\one$ is the trivial $\AA^\times$-module, thought of as an $\cS$-module.
\end{corollary}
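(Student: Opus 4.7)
The plan is to read off the corollary directly from Proposition~\ref{prop:formula_for_delta}, which pins down the image of $\delta$ concretely inside $\widetilde{\cS}$, and then to recognize that image as $\one\oplus\prescript{\iota}{}{\one}$.

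First, I would let $V\subseteq\widetilde{\cS}$ denote the two-dimensional $\CC$-subspace spanned by the constant function $\one_V$ and the function $\phi(g)=\abs{g}^{-1}$ (both visibly lie in $\widetilde{\cS}$, as they are smooth and of moderate growth on $\AA^\times/F^\times$). Proposition~\ref{prop:formula_for_delta} tells us that for every $\Psi\in\cL$,
\[
    \delta(\Psi)=(\F\Psi)(0)\,\phi-\Psi(0)\,\one_V,
\]
so $\image(\delta)\subseteq V$. Since by definition $\P=\cL/\ker\delta$, the first isomorphism theorem yields a canonical injection $\P\hookrightarrow V$.

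Second, I would check that this map is surjective, i.e., that $\image(\delta)=V$ and not a proper subspace. By Proposition~\ref{prop:formula_for_delta} this amounts to producing $\Psi_1,\Psi_2\in S(\AA)$ (whose images in $\cL$ suffice) realizing the pairs $(\Psi_i(0),(\F\Psi_i)(0))=(1,0)$ and $(0,1)$. Such Bruhat-Schwartz functions obviously exist -- any bump function and any scalar multiple of its Fourier transform work -- so $\P\xrightarrow{\sim} V$.

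Third, and this is the only computation requiring care, I would identify $V$ with $\one\oplus\prescript{\iota}{}{\one}$ as an $\cS$-module. For $f\in\cS$, a direct computation of the convolution action on $\widetilde{\cS}$ gives
\[
    f*\one_V=\left(\int_{\AA^\times/F^\times}f(h)\,\dtimes{h}\right)\one_V,\qquad f*\phi=\left(\int_{\AA^\times/F^\times}f(h)\abs{h}\,\dtimes{h}\right)\phi,
\]
since the constant and $\abs{\cdot}^{-1}$ are eigenvectors for $\AA^\times$-convolution by the characters $\one$ and $\iota(\one)$ respectively. The first action is exactly the action of $\cS$ on the trivial module $\one$; the second, after the change of variable $h\mapsto h^{-1}$, is the action of $\cS$ on $\prescript{\iota}{}{\one}$. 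Hence $V=\CC\cdot\one_V\oplus\CC\cdot\phi\cong \one\oplus\prescript{\iota}{}{\one}$, and composing with the inverse of the identification $\P\xrightarrow{\sim} V$ produces the claimed isomorphism.

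There is no substantive obstacle here: all the real work has been absorbed into Proposition~\ref{prop:formula_for_delta} (the Poisson-summation incarnation of $\delta$), and the corollary is essentially just a bookkeeping of characters. The only mildly delicate point is checking that the two one-dimensional pieces of $V$ have the correct $\cS$-module structure, which is a two-line computation with convolutions.
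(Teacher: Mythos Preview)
Your proposal is correct and is exactly the argument the paper leaves implicit: the paper offers no proof beyond ``It is now clear exactly what $\P$ is'' after Proposition~\ref{prop:formula_for_delta}, and you have simply unpacked that sentence. The identification of the two lines in $V$ with $\one$ and $\prescript{\iota}{}{\one}$ via the convolution computation is the only thing that needed spelling out, and you have done so correctly.
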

In particular, $\P$ is a sum of skyscraper co-sheaves on $\Chars(F)$.

Using this corollary, we can also finally prove Proposition~\ref{prop:kernel_of_delta}.
\begin{proof}[Proof of Proposition~\ref{prop:kernel_of_delta}]
    We want to show that $\ker{\delta}=\cS\cap\cL$. The inclusion of the LHS in the RHS was already shown in Remark~\ref{remark:kernel_of_delta}.
    
    The inclusion in the other direction will use Corollary~\ref{cor:supp_of_polar}. We turn $\cS\cap\cL$ into a bornological vector space via the Cartesian diagram
    \[\xymatrix{
        \cS\cap\cL \ar[r] \ar[d] & \cS \ar[d] \\
        \cL \ar[r] & \widetilde{\cS}.
    }\]
    First, we observe that the restriction $\iota\circ\delta|_{\cS\cap\cL}\co\cS\cap\cL\ra\prescript{\iota}{}{\widetilde{\cS}}$ factors through $\prescript{\iota}{}{{\cS'}}$. Indeed, it is given by the difference of maps $\iota-\F$, and the restriction lies in $\cS+\prescript{\iota}{}{\cL}\subseteq\prescript{\iota}{}{{\cS'}}$ (we are using the fact that the map ${\cS'}\ra\widetilde{\cS}$ is injective).
    
    Hence, we obtain an injective map
    \[
        \frac{\cS\cap\cL}{\ker{\delta}}\ra\prescript{\iota}{}{{\cS'}},
    \]
    and want to show that its domain is $0$. Since $\P$ is (in an informal sense) torsion by Corollary~\ref{cor:supp_of_polar}, and the domain in question is a subspace of $\P$, it is enough to show that ${\cS'}$ is torsion-free in the same sense. That is, it is enough to show that if $f\in\cS'$ satisfies
    \[
        (f(ghh')\abs{h}-f(gh'))-(f(gh)\abs{h}-f(g))=0
    \]
    for all $h,h'\in\AA^\times$, then $f(g)=0$. However, this is obviously correct.
\end{proof}

\section{Abelian Extensions} \label{sect:decomposition_under_ext}

Let $E\supseteq F$ be an Abelian extension. For the sake of simplicity, we suppose that $E$ is quadratic over $F$. Then the zeta function of $E$ factors into a product of two L-functions over $F$. In this section, we will present this statement's incarnation in our language. This turns out to be an actual refinement; the new statement contains additional information allowing one to relate zeta integrals of specific test functions on $E$ with zeta integrals of specific test functions on $F$.

We begin by introducing some notation. Denote the character on $\AA^\times_F/F^\times$ corresponding to the extension $E/F$ by $\eta=\eta_{E/F}$. This induces an automorphism
\[
    \chi\mapsto\chi\eta
\]
of $\Chars(F)$, along with an automorphism
\begin{align*}
    \eta\co\cS_F & \ra\cS_F \\
    f(g) & \mapsto\eta(g)\cdot f(g)
\end{align*}
of the ring $\cS_F$. For a $\cS_F$-module $M$, we will let $\prescript{\eta}{}{M}$ denote the twist of $M$ by $\eta$.

From the geometric point of view, the extension $E\supseteq F$ induces a canonical map
\[
    N\co\Chars(F)\ra\Chars(E),
\]
given by pre-composing a Hecke character $\chi\co\AA_F^\times/F^\times\ra\CC^\times$ with $N_{E/F}$. Moreover, we get a morphism of non-unital rings
\[
    N_!\co \cS_E\ra\cS_F,
\]
given by integrating along the multiplicative norm map
\[
    N_{E/F}\co\AA_E^\times\ra\AA_F^\times.
\]

We now claim that, informally, when $\cL_E$ is ``pulled back'' to $\Chars(F)$ along $N$, it splits into a product. This will recover the corresponding classical fact about decomposition of L-functions for quadratic extensions.
\begin{theorem} \label{thm:L_decomposes}
    There is a canonical isomorphism of bornological $\cS_F$-modules,
    \[
        \cS_F\otimes_{\cS_E}\cL_E\cong\cL_F\otimes_{\cS_F}\prescript{\eta}{}{\cL_F}.
    \]
    Moreover, the isomorphism between the two sides is also compatible with the maps into $\cS'_F$.
\end{theorem}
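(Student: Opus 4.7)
The plan is to construct canonical injective $\cS_F$-module maps from both sides into $\cS'_F$ and read off the desired isomorphism from the fact that they have the same image. This mirrors the strategy used in Corollary~\ref{cor:L_inj_into_S_prime}, and categorifies the classical identity $L_E(\chi\circ N,s)=L_F(\chi,s)L_F(\chi\eta,s)$.

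First I would construct $\alpha\co\cS_F\otimes_{\cS_E}\cL_E\to\cS'_F$ as follows. For $\Psi\in\cL_E=S(\AA_E)_{/E^\times}$, form the sum $g\mapsto\sum_{q\in E^\times}\Psi(qg)$ on $\AA_E^\times/E^\times$ and push forward along $N_{E/F}\co\AA_E^\times/E^\times\to\AA_F^\times/F^\times$; the fibers are torsors under the finite-covolume group $\AA_E^1/E^1$, so this pushforward is defined, and the integrability conditions built into the definition of $\cS'_F$ are satisfied because $N_{E/F}$ respects the adelic height function up to a bounded factor. Since $N_!\co\cS_E\to\cS_F$ is a ring homomorphism, the resulting map $\cL_E\to\cS'_F$ is $\cS_E$-linear, and extends to $\alpha$. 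Second, I would construct $\beta\co\cL_F\otimes_{\cS_F}\prescript{\eta}{}{\cL_F}\to\cS'_F$ by applying Construction~\ref{const:L_into_S_prime} to each tensor factor, using pointwise multiplication by $\eta$ to canonically identify $\prescript{\eta}{}{\cS'_F}$ with $\cS'_F$ (which is $\cS_F$-linear because $\eta^2=1$), and then multiplying the two resulting elements in the convolution ring $\cS'_F$; well-definedness on the tensor product over $\cS_F$ uses the same $\eta^2=1$ computation.

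Next I would verify that $\alpha$ and $\beta$ are injective. For each, base change to $\cS'_F$ (Remark~\ref{remark:S_prime_is_localization}) turns the map into an isomorphism of $\cS'_F$-modules: for $\beta$ this follows from Theorem~\ref{thm:S_prime_L_isom} applied separately to $\cL_F$ and to $\prescript{\eta}{}{\cL_F}$, while for $\alpha$ one adapts the proof of Theorem~\ref{thm:S_prime_L_isom} by constructing local inverses at each place $v$ of $F$, using the absolute convergence of $L_{E_v}(\chi\circ N,s)$ in the right half-plane corresponding to $\cS'_F$. Injectivity of $\alpha$ and $\beta$ then follows from the localization property of $\cS'_F$ on smooth $\cS_F$-modules, exactly as in Corollary~\ref{cor:L_inj_into_S_prime}.

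The crux, and the main obstacle, is identifying the images of $\alpha$ and $\beta$ inside $\cS'_F$. Using the restricted tensor product decompositions of $\cL_E$ and $\cL_F$ over the places of $F$, this reduces to a place-by-place question. At a split place $v$, where $F_v\otimes_F E\cong F_v\times F_v$ and $\eta_v$ is trivial, both local modules reduce to $S(F_v)\otimes_{S(F_v^\times)}S(F_v)$, and the identification is immediate. At a non-split place, both $S(F_v^\times)\otimes_{S(E_v^\times)}S(E_v)$ and $S(F_v)\otimes_{S(F_v^\times)}\prescript{\eta_v}{}{S(F_v)}$ are invertible $S(F_v^\times)$-modules, and the essential content is to match their generators inside the local analogue of $\cS'$. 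In the unramified case this can be checked on the distinguished vectors $\one_{\O_{E_v}}$ and $\one_{\O_{F_v}}\otimes\one_{\O_{F_v}}$, whose Mellin transforms are respectively $L_{E_v}(\chi\circ N,s)$ and $L_{F_v}(\chi,s)L_{F_v}(\chi\eta_v,s)$, which agree by the classical local L-factor identity; the ramified and Archimedean places require separate but analogous local computations. Assembling the local identifications via restricted tensor products and descending through the $E^\times$- and $F^\times$-quotients (compatibly, since $F^\times\subseteq E^\times$) yields equality of the global images. Compatibility of the resulting canonical isomorphism with the maps into $\cS'_F$ is automatic from the construction.
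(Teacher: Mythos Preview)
Your overall strategy---embed both sides in a common ambient space and identify the images place-by-place---is exactly the paper's strategy. The substantive difference is the level at which you carry it out. The paper lifts the statement to the $\Schw$-level, proving
\[
    S(\AA_F^\times)\otimes_{S(\AA_E^\times)}S(\AA_E)\cong S(\AA_F)\otimes_{S(\AA_F^\times)}\prescript{\eta}{}{S(\AA_F)}
\]
\emph{before} taking $F^\times$-coinvariants, and uses the huge space $\Func(\AA_F^\times)$ of all functions on $\AA_F^\times$ as the common target (with the maps given by fiber integration along $N_{E/F}$ and by $f_1\otimes f_2\mapsto f_1*\eta f_2$, respectively). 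Working before coinvariants means the restricted tensor product decomposition is available on the nose, so the place-by-place verification is a direct local computation with no global input; the passage to $\cS_F$-modules and the compatibility with $\cS'_F$ are then formal consequences. Your route through $\cS'_F$ builds in that compatibility from the start, which is pleasant, but at the cost of needing the injectivity and localization machinery of Theorem~\ref{thm:S_prime_L_isom} and Corollary~\ref{cor:L_inj_into_S_prime}.

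There is a small gap in your injectivity argument for $\alpha$. The proof of Corollary~\ref{cor:L_inj_into_S_prime} does not use merely that $\cS'\otimes_\cS\cL\to\cS'$ is an isomorphism; it uses a chosen isomorphism $t\co\cS\xrightarrow{\sim}\cL$ together with the known injectivity of $\cS\hookrightarrow\cS'$. To run the same argument for $\cS_F\otimes_{\cS_E}\cL_E$ you would first need to know that this module is isomorphic to $\cS_F$, which amounts to combining Theorem~\ref{thm:L_is_triv} for $E$ with the fact that $\cS_F$ is smooth over $\cS_E$ via $N_!$. This is true but is an extra step you have not supplied. The paper's choice to work upstairs in $\Func(\AA_F^\times)$ sidesteps this entirely.
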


Before diving into the proof, let us re-interpret Theorem~\ref{thm:L_decomposes} in terms of the analogy of Remark~\ref{remark:L_is_divisor}, to get a more geometric intuition.
\begin{remark}
    Consider the canonical map
    \[
        N\co\Chars(F)\ra\Chars(E),
    \]
    given by pre-composing a Hecke character with $N_{E/F}$. This map has kernel $\{1,\eta\}$, and its image is the subgroup of $\Chars(E)$ given by characters that are invariant under the Galois group of $E$ over $F$. Let us denote this image by $\Chars(E/F)=\Chars(E)^{\Gal(E/F)}$.
    
    With this language, the informal essence of Theorem~\ref{thm:L_decomposes} is that the push-forward of the ``divisor'' $\cL_F$ to $\Chars(E/F)$ (as a divisor) identifies with the restriction of $\cL_E$ to $\Chars(E/F)$.
\end{remark}

\begin{proof}[Proof of Theorem~\ref{thm:L_decomposes}]
    We will prove that
    \[
        S(\AA_F^\times)\otimes_{S(\AA_E^\times)}S(\AA_E)\cong S(\AA_F)\otimes_{S(\AA_F^\times)}\prescript{\eta}{}{S(\AA_F)}.
    \]
    
    This will be proven by embedding both sides in the larger space $\Func(\AA_F^\times)$ of functions on $\AA_F^\times$, and showing that their images coincide. Furthermore, we need to show that the bornologies on the two sides coincide. The two embeddings will be induced by the two maps,
    \begin{equation*}\xymatrix{
        S(\AA_E) \ar[r] & \Func(\AA_F^\times), \\
        S(\AA_F)\otimes_{S(\AA_F^\times)}\prescript{\eta}{}{S(\AA_F)} \ar[r] & \Func(\AA_F^\times),
    }\end{equation*}
    given by
    \[
        \left\{h\mapsto f(h)\right\}\mapsto\left\{g\mapsto \int_{N(h)=g}f(h)\dtimes{h}\right\}
    \]
    and
    \[
        f_1\otimes f_2\mapsto f_1*\eta f_2=\left\{g\mapsto \int f_1(g'^{-1}g)f_2(g')\eta(g')\dtimes{g'}\right\}
    \]
    respectively.
    
    To check this, it is enough to check that the above claim holds place-by-place, which is a straightforward (albeit tedious) verification.
\end{proof}

\begin{remark} \label{remark:abelian_L_decomposes}
    Let $E/F$ be an Abelian extension, which is no longer necessarily quadratic. Let us state the relevant generalization of Theorem~\ref{thm:L_decomposes}.
    
    There are characters $\eta_i\co\AA_F^\times/F^\times\ra\CC^\times$ corresponding to the extension $E/F$, with $0\leq i\leq d-1$ and where the degree of $E$ over $F$ is $d$. Moreover, there are maps
    \begin{align*}
        N & \co\Chars(F)\ra\Chars(E) \\
        N_! & \co \cS_E\ra\cS_F,
    \end{align*}
    as above. The claim is that the canonical maps into $\cS_F'$ induce an isomorphism
    \[
        \cS_F\otimes_{\cS_E}\cL_E\cong\prescript{\eta_0}{}{\cL_F}\otimes_{\cS_F}\cdots\otimes_{\cS_F}\prescript{\eta_{d-1}}{}{\cL_F}.
    \]
    % Proven by L-functions in most places, devissage to purely ramified local extensions, and then brute force.
\end{remark}

\begin{remark} \label{remark:cubic_L_decomposes}
    One can also generalize the above to extensions that are not necessarily Abelian. For example, suppose that $E/F$ is a non-Abelian cubic extension. In this case, the extension defines an irreducible generic automorphic representation $(\pi,V)$ of $\GL_2(\AA_F)$. The author believes (but has not proven) that the correct variant of Remark~\ref{remark:abelian_L_decomposes} is as follows.
    
    We define a $S(\AA_F^\times)$-module by restriction of $\pi$ to $\GL_1(\AA_F)\times \{1\}$ along the diagonal. We denote its co-invariants under the resulting action of $F^\times$ by $\cL_F(\pi)$. There is an embedding $V\subset\Func(\AA_F^\times)$ given by the Whittaker model. After taking quotients by $F^\times$, this gives a map $\cL_F(\pi)\ra\cS'_F$.
    
    Now, the author believes (but has not proven in general) that this induces an isomorphism
    \[
        \cS_F\otimes_{\cS_E}\cL_E\cong\cL_F\otimes_{\cS_F}\cL_F(\pi).
    \]
    The author finds it interesting that the object $\cS_F\otimes_{\cS_E}\cL_E$, constructed from \emph{Galois} data of the extension $E/F$, can be directly related to the underlying space $V$ of the \emph{automorphic} representation $\pi$, through its quotient $\cL_F(\pi)$.
    
    That is, it seems that the factorization of the L-function for non-Abelian field extension allows constructing a correspondence between the underlying spaces of the automorphic representations associated to the field extension, and a construction made of pure Galois data. 
\end{remark}

\begin{appendices}
\section{Generators for \texorpdfstring{$\cL$}{L}} \label{app:non_canonical_triv}

The goal of this section is to explicitly show that the $\cS$-module $\cL$ defined above happens to be free of rank one. This will be the main result of this section, Theorem~\ref{thm:L_is_triv}. The choice of generator for $\cL$ is analogous to the process of picking a standard L-factor at every place in the GCD description of L-functions. In particular, the generator itself is not well-defined, and therefore the constructions below will be somewhat ad hoc.

For the non-Archimedean places, we will see that the standard L-factor used suits our purposes just fine. That is, it serves as a generator for an appropriate module. See Claim~\ref{claim:locally_trivial_at_Qp}.

For Archimedean places, this is no longer true. That is, the standard choice of L-factor at the Archimedean places turns out not to be a generator for the appropriate module. The reason for this failure is that the standard L-factor decreases too quickly in vertical strips. Instead, we will merely show the existence of a modification for this L-factor which \emph{does} have the right growth properties to be a generator. This is the content of Claim~\ref{claim:locally_trivial_at_R}. See also Remark~\ref{remark:growth_of_L_in_vertical_strips}.

\begin{theorem} \label{thm:L_is_triv}
    The $\cS$-module $\cL$ is isomorphic to $\cS$.
\end{theorem}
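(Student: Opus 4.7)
The plan is to construct an explicit isomorphism $t\colon\cS\xrightarrow{\sim}\cL$ by a local-to-global argument. First, for each place $v$ of $F$, I would prove a local trivialization (the analogues of Claims~\ref{claim:locally_trivial_at_Qp} and~\ref{claim:locally_trivial_at_R}) asserting that $S(F_v)$ is isomorphic to $S(F_v^\times)$ as a module over the convolution algebra $S(F_v^\times)$. Then I would glue these local isomorphisms into a global isomorphism $S(\AA^\times)\xrightarrow{\sim}S(\AA)$ via the restricted tensor product, and descend to $F^\times$-coinvariants. The descent is automatic: since $\AA^\times$ is abelian, convolution commutes with the multiplicative action of $F^\times\subseteq\AA^\times$, so any $S(F_v^\times)$-linear map respects the diagonal $F^\times$-action.

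For the non-Archimedean local claim, I would pass to Mellin transforms. Both $S(F_v^\times)$ and $S(F_v)$ decompose as direct sums indexed by characters of $\O_v^\times$, i.e., by connected components of $\Chars(F_v)$. On each \emph{ramified} component the natural inclusion $S_\chi(F_v^\times)\subseteq S_\chi(F_v)$ is already an equality, while on the \emph{unramified} component one finds $S_1(F_v^\times)\cong\CC[q_v^s,q_v^{-s}]$ and $S_1(F_v)\cong\CC[q_v^s,q_v^{-s}]\cdot(1-\chi_0(\pi_v)q_v^{-s})^{-1}$. Multiplication by the local L-factor therefore provides a free rank-one trivialization, generated (after inverse Mellin) by $\one_{\O_v}$; explicitly, $(\one_{\O_v^\times}-\one_{\pi_v\O_v^\times})\ast\one_{\O_v}=\one_{\O_v^\times}$, recovering the computation at the end of the proof of Theorem~\ref{thm:S_prime_L_isom}.

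For the Archimedean local claim, the analogous strategy is to exhibit a ``Mellin multiplier'' $L_v$ on $\Chars(F_v)$ relating the Paley--Wiener images of $S(F_v^\times)$ and $S(F_v)$. The standard Archimedean L-factor (built from $\Gamma$-functions) gives the correct multiplicative relation but decreases too rapidly in vertical strips, so its reciprocal does not lie in the roughening of $\cS_v$ and cannot serve as an \emph{inverse} to multiplication by $L_v$. To remedy this I would perturb the standard factor by an entire exponential correction of the form $\exp(p(s))$ chosen so that the product is zero-free and so that both it and its reciprocal are of moderate growth in vertical strips; the inverse Mellin transform of this modified L-factor produces an Archimedean Schwartz function $\Phi_v\in S(F_\infty)$ implementing the local isomorphism.

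The hard part is this Archimedean construction: one must simultaneously arrange zero-freeness, the correct multiplicative relation between the two Paley--Wiener spaces, and two-sided moderate growth in vertical strips. The non-canonical nature of this choice is intrinsic and is exactly the phenomenon flagged in Remark~\ref{remark:growth_of_L_in_vertical_strips}. By contrast, the gluing step is routine: at almost every place the local generator is $\one_{\O_v}$, so the restricted tensor product $\Phi=\bigotimes'_v\Phi_v\in S(\AA)$ is well-defined, and the convergence of the corresponding Euler product in the relevant right-half-plane region already appears inside the proof of Theorem~\ref{thm:S_prime_L_isom}.
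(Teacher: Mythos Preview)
Your overall architecture matches the paper exactly: reduce to Claim~\ref{claim:A_is_A_times}, prove it place-by-place, and glue via the restricted tensor product using that the non-Archimedean generator can be taken to be $\one_{\O_v}$. The non-Archimedean argument you sketch is equivalent to the paper's explicit convolution kernel, just phrased on the Mellin side.

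The gap is in the Archimedean step. Writing the correction as $\exp(p(s))$ does not by itself solve anything: if $p$ is a polynomial, it cannot work, because Stirling gives $\lvert\Gamma(s/2)\rvert\sim\lvert t\rvert^{(\sigma+1)/2}e^{-\pi\lvert t\rvert/4}$ in vertical strips, and $\Re p(\sigma+it)$ is a polynomial in $t$, which can never match $\tfrac{\pi}{4}\lvert t\rvert$ up to $O(\log\lvert t\rvert)$ in both directions $t\to\pm\infty$. If instead $p$ is allowed to be an arbitrary entire function, then you have said nothing beyond ``some zero-free entire correction exists,'' and the whole difficulty is precisely to produce one with the required two-sided bounds. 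The paper's proof supplies this missing ingredient via Lemma~\ref{lemma:correct_vertical}, which invokes Arakelian's approximation theorem to build an entire $\phi$ with $\lvert\phi(s)-\log f(s)\rvert$ uniformly bounded outside a horizontal half-strip, and then sets $g(s)=e^{-\phi(s)}f(s)$. That complex-analytic input is the heart of the Archimedean case and is absent from your sketch.

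A minor imprecision: the modified local L-factor still has poles, so its inverse Mellin transform is not literally a Schwartz function $\Phi_v\in S(F_v)$; rather, multiplication by $g(s)$ on the Mellin side defines the isomorphism $S(F_v^\times)\to S(F_v)$, and one then checks surjectivity by hand as the paper does.
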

This will follow from:
\begin{claim} \label{claim:A_is_A_times}
    The $S(\AA^\times)$-module $S(\AA)$ is isomorphic to $S(\AA^\times)$.
\end{claim}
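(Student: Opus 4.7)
My plan is to prove Claim~\ref{claim:A_is_A_times} by establishing the statement separately at each place of $F$ and then combining via the restricted tensor product. Since
\[
    S(\AA) = S(F_\infty) \otimes {\bigotimes_p}' S(F_p), \qquad S(\AA^\times) = S(F_\infty^\times) \otimes {\bigotimes_p}' S(F_p^\times),
\]
with distinguished vectors $\one_{\O_p}$ and $\one_{\O_p^\times}$ respectively, it suffices to produce, for each place $v$, a generator $\Psi_v \in S(F_v)$ of $S(F_v)$ as an $S(F_v^\times)$-module under multiplicative convolution, in such a way that at almost every finite place $\Psi_p = \one_{\O_p}$ and convolution by a suitable normalization of $\one_{\O_p^\times}$ returns $\one_{\O_p}$. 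The tensor product $\bigotimes_v \Psi_v$ will then assemble into a global generator, yielding the claim.

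At each non-Archimedean $p$, I would take $\Psi_p = \one_{\O_p}$. Decomposing along the connected components of the dual group of $F_p^\times$, the Mellin transform turns convolution into pointwise multiplication, identifying $S(F_p^\times)$ with the ring of Laurent polynomials in $q_p^{-s}$ (times ordinary polynomials on ramified components) and $S(F_p)$ with this ring multiplied by the local L-factor $(1 - \chi(\pi_p) q_p^{-s})^{-1}$ on the unramified component. Since the Mellin transform of $\one_{\O_p}$ is, up to normalization, precisely this L-factor, multiplication by it is an isomorphism, and the compatibility $\one_{\O_p^\times} \ast \one_{\O_p} \propto \one_{\O_p}$ is a direct computation. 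This gives Claim~\ref{claim:locally_trivial_at_Qp}.

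The Archimedean case is the main obstacle. The naive choice $\Psi_v(x) = e^{-\pi|x|^2}$ fails: its Mellin transform $\Gamma_\RR(s)$ decreases too rapidly in vertical strips. The Mellin image of $S(F_v^\times)$ consists of entire functions that are rapidly decreasing in vertical strips, while the Mellin image of $S(F_v)$ consists of meromorphic functions with simple poles at a prescribed arithmetic progression of non-positive real points, also rapidly decreasing in vertical strips. For multiplication by $\hat{\Psi}_v(s)$ to be a bijection between these, I need $\hat{\Psi}_v(s)$ to have the correct polar structure \emph{and} at most polynomial decay in vertical strips, with $1/\hat{\Psi}_v(s)$ of at most polynomial growth in vertical strips. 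I would therefore construct $\Psi_v$ by multiplying $\Gamma_\RR$ by an entire correcting factor of order one chosen to neutralize the exponential decay of $\Gamma_\RR$ without introducing zeroes or spoiling membership in $S(F_v)$; equivalently, one exhibits a concrete $\Psi_v \in S(F_v)$ whose Mellin transform has the desired moderate behavior. This is the content of Claim~\ref{claim:locally_trivial_at_R}.

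The hard part will be exhibiting this Archimedean correcting factor and verifying both that the resulting $\Psi_v$ still lies in $S(F_v)$ (i.e., its Mellin transform is rapidly decreasing in vertical strips, with simple poles of the prescribed shape) and that its reciprocal has polynomial growth in vertical strips. Once both local claims are in hand, the assembly into a global generator is automatic: the tensor product $\Psi_\infty \otimes \bigotimes'_p \one_{\O_p}$ is a well-defined element of $S(\AA)$ by the agreement with the distinguished vector at almost every finite place, and convolution by it is an isomorphism of $S(\AA^\times)$-modules by multiplicativity of the Mellin transform.
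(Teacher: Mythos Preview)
Your plan matches the paper's exactly: prove the isomorphism place-by-place (Claims~\ref{claim:locally_trivial_at_Qp}, \ref{claim:locally_trivial_at_R}, \ref{claim:locally_trivial_at_C}) with the non-Archimedean generator sending $\one_{\O_p^\times}\mapsto\one_{\O_p}$, and then assemble via the restricted tensor product. Your diagnosis of the Archimedean obstacle---that $\Gamma_\RR(s)$ has the right poles but decays too fast in vertical strips, so one must multiply by a zero-free entire corrector---is precisely the paper's.

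Two points where the paper goes further than your outline. First, the ``hard part'' you flag is resolved not by an ad~hoc order-one product but by Arakelian's approximation theorem (Lemma~\ref{lemma:correct_vertical}): one approximates $\log f(s)$ uniformly outside a half-strip by an entire function $\phi$, and then $g(s)=e^{-\phi(s)}f(s)$ has the same zeroes and poles as $f$ while being uniformly bounded above and below off the half-strip. This is the key input you are missing; ``order one'' alone does not guarantee the two-sided polynomial bounds you need. Second, you do not mention the complex place. The paper treats $F_v=\CC$ separately (Claim~\ref{claim:locally_trivial_at_C}), because the Mellin picture there is indexed by an additional integer $n$ (the circle character), and one must supply a family $\{g_n(s)\}$ with poles at $-\tfrac{|n|}{2}-\ZZ_{\geq 0}$ that is uniformly controlled in both $t$ and $n$; the paper does this by setting $g_n(s)=g(|n|+2s)$ with $g$ the real-place corrector.
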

We will prove this place-by-place.

\begin{claim} \label{claim:locally_trivial_at_Qp}
    Let $F$ be a non-Archimedean local field. Then the $S(F^\times)$-module $S(F)$ is isomorphic to $S(F^\times)$. Moreover, the isomorphism can be chosen such that it sends $\one_{\O^\times}$ to $\one_\O$.
\end{claim}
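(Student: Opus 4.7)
The plan is to give the isomorphism by an explicit formula, then verify bijectivity via an isotypic decomposition. Let $\iota\co S(F^\times)\hookrightarrow S(F)$ denote extension by zero; a direct check (using that $z^{-1}x\in F^\times$ whenever $z,x\in F^\times$) shows $\iota$ is a homomorphism of $S(F^\times)$-modules. Define
\[
    \psi\co S(F^\times)\to S(F),\quad \psi(\phi)=\iota(\phi)+\phi*\one_{\pi\O},
\]
which is a module map since each summand is (the second commutes with left convolution by associativity). Normalizing $\int_{\O^\times}\dtimes{u}=1$, a direct computation gives $\one_{\O^\times}*\one_{\pi\O}=\one_{\pi\O}$, so $\psi(\one_{\O^\times})=\one_{\O^\times}+\one_{\pi\O}=\one_\O$ as required.

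To verify $\psi$ is a bijection, I decompose both $S(F^\times)$ and $S(F)$ under the right multiplicative action of $\O^\times$. Since this action fixes $0\in F$ and commutes with left convolution by $S(F^\times)$, the resulting decompositions
\[
    S(F^\times)=\bigoplus_\chi S(F^\times)_\chi, \qquad S(F)=\bigoplus_\chi S(F)_\chi
\]
(indexed by smooth characters $\chi$ of $\O^\times$) are decompositions of $S(F^\times)$-submodules, so it suffices to check bijectivity of $\psi$ on each isotype.

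For a non-trivial $\chi$, any $f\in S(F)_\chi$ satisfies $f(0)=\chi(u)f(0)$ for all $u\in\O^\times$, forcing $f(0)=0$; local constancy at $0$ then forces $f$ to vanish on some $\pi^N\O$. Hence $S(F)_\chi=\iota(S(F^\times)_\chi)$. Since $\one_{\pi\O}$ is right-$\O^\times$-invariant, left convolution with it annihilates $S(F^\times)_\chi$ by character orthogonality ($\int_{\O^\times}\chi(u)\dtimes{u}=0$), so $\psi|_{S(F^\times)_\chi}=\iota$ is a bijection onto $S(F)_\chi$. On the trivial isotype, $S(F^\times)^{\O^\times}$ and $S(F)^{\O^\times}$ have bases $\{\one_{\pi^n\O^\times}\}_{n\in\ZZ}$ and $\{\one_{\pi^n\O}\}_{n\in\ZZ}$ respectively, and the calculation $\psi(\one_{\pi^n\O^\times})=\one_{\pi^n\O^\times}+\one_{\pi^{n+1}\O}=\one_{\pi^n\O}$ matches these bases.

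The main obstacle is that $\psi$ cannot be written as the single convolution $\phi\mapsto\phi*\one_\O$: that naive formula annihilates every non-trivial isotype and so fails injectivity, even though it gives an isomorphism on the trivial isotype and sends $\one_{\O^\times}$ to $\one_\O$. The correction term $\phi*\one_{\pi\O}$ in our definition provides precisely the right adjustment so that the formula treats all isotypes uniformly while still sending $\one_{\O^\times}$ to $\one_\O$.
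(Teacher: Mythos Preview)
Your map $\psi(\phi)=\iota(\phi)+\phi*\one_{\pi\O}$ is exactly the paper's map: the paper convolves with the distribution $\delta_1+\one_{\{|g|<1\}}$, and convolution with $\delta_1$ gives $\iota(\phi)$ while convolution with $\one_{\{|g|<1\}}=\one_{\pi\O\cap F^\times}$ gives your second summand. The paper leaves the bijectivity check as a ``direct verification''; your $\O^\times$-isotypic decomposition is a clean way to carry that out, and your argument is correct (one terminological slip: in $\phi*\one_{\pi\O}$ the function $\one_{\pi\O}$ sits on the right, so this is right convolution, not left, but the orthogonality argument goes through unchanged).
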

This is also proven as Item~(2) of Lemma~4.18 of \cite{zeta_rep}.
\begin{proof}
    We consider the morphism
    \[
        S(F^\times)\ra S(F)
    \]
    given by convolution with the distribution
    \[
        f(g)=\begin{cases}
            0 & \abs{g}>1 \\
            \delta_1(g) & \abs{g}=1 \\
            1 & \abs{g}<1,
        \end{cases}
    \]
    where $\delta_1(g)$ is the delta distribution at $g=1$. It is a direct verification to check that this map satisfies the required properties.
\end{proof}

\begin{claim} \label{claim:locally_trivial_at_R}
    Let $F=\RR$. Then the $S(\RR^\times)$-module $S(\RR)$ is isomorphic to $S(\RR^\times)$.
\end{claim}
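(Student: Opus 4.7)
The plan is to decompose using the $\{\pm 1\}\subseteq\RR^\times$ action and then construct a generator via the Mellin transform. Since any $S(\RR^\times)$-module isomorphism must commute with the $\{\pm 1\}$ action, both $S(\RR^\times)=S(\RR^\times)^+\oplus S(\RR^\times)^-$ and $S(\RR)=S(\RR)^+\oplus S(\RR)^-$ split into even and odd components, so it suffices to exhibit an isomorphism on each piece separately. I focus on the even part; the odd case is handled analogously with the relevant poles shifted to odd negative integers.

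Under the Mellin transform on $(0,\infty)$, $S(\RR^\times)^+$ is identified with a Paley--Wiener-type space of entire functions on $\CC$ that are rapidly decreasing in vertical strips, while $S(\RR)^+$ is identified with the space of meromorphic functions having at most simple poles at $s=0,-2,-4,\dots$ (whose residues come from the even Taylor coefficients of $\Psi$ at $0$) with appropriate decay in vertical strips. An $S(\RR^\times)^+$-module isomorphism corresponds, on the Mellin side, to multiplication by a meromorphic function $G(s)$ which must satisfy: simple poles exactly at each $s=0,-2,-4,\dots$, no zeros, polynomial growth in vertical strips, and $1/G$ also of polynomial growth. The last condition is essential because the source space consists of rapidly decreasing functions, so $1/G$ must be tame enough that rapid decay is preserved after division.

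The standard Gaussian $e^{-\pi x^2}$ yields $G=\frac{1}{2}\pi^{-s/2}\Gamma(s/2)$, which has the correct poles and no zeros but, by Stirling, decays like $e^{-\pi|t|/4}$ in vertical strips; consequently $1/G$ grows exponentially and convolution with $e^{-\pi x^2}$ fails to be surjective. To remedy this I would replace the generator by a tempered distribution whose Mellin transform has polynomial rather than exponential behavior in vertical strips. Natural building blocks are the Fresnel-type oscillatory distributions $\cos(\pi x^2)$ and $\sin(\pi x^2)$, whose Mellin transforms are (up to normalization) $\Gamma(s/2)\cos(\pi s/4)$ and $\Gamma(s/2)\sin(\pi s/4)$; the oscillatory factors produce $e^{\pi|t|/4}$ growth that exactly cancels the Stirling decay, giving polynomial behavior in vertical strips. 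Each factor in isolation, however, kills half of the required poles, so the actual generator must be assembled from a more delicate combination---e.g.\ a suitable linear combination of Fresnel distributions with shifted arguments---that preserves the full pole set while remaining free of zeros.

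The hardest step is producing a single $G(s)$ satisfying all four conditions simultaneously. The sharp tension is that, by Hadamard factorization, any entire function with simple zeros only at $0,-2,-4,\dots$ has order at least $1$ and therefore exponential growth along vertical lines, which contradicts the polynomial-growth requirement on $1/G$ if $G$ were to be the Mellin transform of a Schwartz function. This is precisely why the generator must be a genuine tempered distribution rather than an element of $S(\RR)$, matching the paper's earlier remarks that the ``modified'' Archimedean L-factor associated to a generator of $\cL$ cannot coincide with the standard $\Gamma_\RR(s)$ on account of growth considerations. Once the distributional $\Phi$ is chosen so that its Mellin transform meets the four conditions, verifying that $f\mapsto f*\Phi$ is an $S(\RR^\times)$-module isomorphism reduces to an explicit check on the Mellin side that multiplication by $G$ and division by $G$ both map the appropriate Paley--Wiener space into the other.
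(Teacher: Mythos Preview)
Your overall strategy---splitting into even and odd parts and working on the Mellin side to find a multiplier $G(s)$ with simple poles exactly at the non-positive even integers, no zeros, and polynomial two-sided bounds in vertical strips---matches the paper exactly. The genuine gap is that you never actually produce such a $G$. You correctly note that $\Gamma(s/2)\cos(\pi s/4)$ and $\Gamma(s/2)\sin(\pi s/4)$ each have the desired vertical growth, but each loses half of the required poles and, worse, each acquires unwanted zeros in the right half-plane (at $s=2,6,10,\dots$ and $s=4,8,12,\dots$ respectively). You then assert that the generator ``must be assembled from a more delicate combination'' without giving any construction or any argument that such a combination exists with \emph{no} zeros. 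This is precisely the hard step, and the paper handles it by a completely different device: it invokes Arakelian's approximation theorem to produce an entire function $\phi$ with $|\phi(s)-\log\Gamma_\RR(s)|$ uniformly bounded outside the horizontal half-strip containing the poles, and sets $g(s)=e^{-\phi(s)}\Gamma_\RR(s)$. This $g$ inherits the pole set of $\Gamma_\RR$, has no zeros, and is bounded above and below outside the strip---exactly your four conditions---without ever naming an explicit test function or Fresnel-type combination.

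A smaller point: your Hadamard paragraph is not quite right. Order $\ge 1$ does \emph{not} force exponential growth along vertical lines; the paper's $1/g(s)$ is an entire function with zeros exactly at $0,-2,-4,\dots$ (hence of order $\ge 1$) that is nonetheless bounded on every vertical line. The correct reason the generator cannot be a Schwartz function is simpler and is already implicit in what you wrote earlier: if it were, its Mellin transform would be rapidly decreasing in vertical strips, so its reciprocal would grow faster than any polynomial there, violating your fourth condition.
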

\begin{proof}
    One way to think about this kind of isomorphism is via the Mellin transform. A map
    \[
        S(\RR^\times)\ra S(\RR)
    \]
    should correspond to pointwise multiplication by some function in the Mellin picture. In order to have the right image, this function (which is, essentially, the local L-function) needs to have the right poles and zeroes, as well as some growth properties in vertical strips.
    
    There are some explicit maps which are almost isomorphisms of $S(\RR)$ with $S(\RR^\times)$. The Mellin transforms of functions such as $e^{-\pi y^2}$ and $e^{iy^2}$ have the correct set of poles to give the right ``divisor'', but decrease too fast as $s\ra-i\infty$. The strategy of our proof will be to choose a function with the right poles and zeroes, and then ``fix'' its vertical growth.
    
    Let us address the proof itself. Note that it is enough to prove the claim separately for even and odd functions on $\RR$. We will define generalized functions $\phi_\pm\co\RR\ra\CC$, one for each parity, which are rapidly decreasing at $\infty$, and such that convolution with $\phi_++\phi_-$ defines the sought-after isomorphism
    \[
        S(\RR^\times)\xrightarrow{\sim}S(\RR).
    \]
    We will explicitly describe only $\phi_+$. The odd variant can be given by $\phi_-(y)=y\phi_+(y)$. We will describe $\phi_+(y)$ via its Mellin transform.
    
    The idea is this. The usual L-function at $\infty$,
    \[
        f(s)=\pi^{-\frac{s}{2}}\Gamma\left(\frac{s}{2}\right),
    \]
    has the right set of zeroes, but it decreases very quickly in vertical strips. This would prevent it from defining an isomorphism. Specifically, its absolute value behaves as
    \[
        \abs{\left(\frac{s}{2e\pi}\right)^{s/2}\sqrt{\pi s}}\sim\abs{t}^{\frac{\sigma+1}{2}}e^{-\frac{\pi}{4}\abs{t}}
    \]
    via the Stirling formula, when $s=\sigma+it$ and $t\ra\pm\infty$. However, applying Lemma~\ref{lemma:correct_vertical} below to $f(s)$ yields an entire function $g(s)$, which has no zeroes, has a simple pole at every non-positive even integer, and such that $g(s)$ is bounded from above and below in vertical strips.
    
    We now claim that multiplying the Mellin transform by $g(s)$ gives an isomorphism. Specifically, let
    \[
        \alpha\co S(\RR^\times)_+\ra S(\RR)_+
    \]
    be the map on even functions sending a function whose Mellin transform is $h(s)$ to the function whose Mellin transform is $h(s)g(s)$. We wish to show that this map is bijective.
    
    Indeed, the map $\alpha$ is clearly injective. It remains to show that it is a surjection. Let $u(s)$ be the Mellin transform of an even function $v(y)$ in $S(\RR)_+$. We wish to show that $u(s)/g(s)$ is entire and is rapidly decreasing in vertical strips. That it is entire is clear, since $u(s)$ has at most simple poles at negative even integers. The rapid decrease in vertical strips follows by induction, using the fact that
    \[
        \frac{v(y)-v(0)e^{-\pi y^2}}{y}
    \]
    removes the pole at $0$ and shifts $u(s)$ by $1$.
\end{proof}

In the course of the proof, we have used the following lemma to correct the behaviour of a function in vertical strips.
\begin{lemma} \label{lemma:correct_vertical}
    Let $f(s)$ be a meromorphic function on $\CC$, which has no zeroes or poles outside the horizontal half-strip $\{\sigma+it\suchthat\text{$\sigma<1$ and $\abs{t}<1$}\}$. Then there exists a meromorphic function $g(s)$ such that $f(s)/g(s)$ has no zeroes or poles, and $g(s)$ is uniformly bounded from above and below outside the half-strip.
\end{lemma}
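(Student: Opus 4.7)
The plan is to reduce the statement to a direct application of Arakelian's uniform approximation theorem. Let $U=\{\sigma+it\suchthat \text{$\sigma<1$ and $\abs{t}<1$}\}$ denote the open half-strip and $V=\CC\setminus\bar U$ its exterior; by hypothesis, $f$ is holomorphic and non-vanishing on an open neighbourhood of $\bar V$. Since $\hat{\CC}\setminus\bar V = U\cup\{\infty\}$ is connected in the Riemann sphere, $\bar V$ is simply connected, and therefore a single-valued holomorphic branch of $\log f$ can be chosen on an open neighbourhood of $\bar V$.

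The first step is to check that $\bar V$ is an Arakelian set, i.e., that its complement $\hat{\CC}\setminus\bar V=U\cup\{\infty\}$ is connected and locally connected at $\infty$. Connectedness was already noted. For local connectivity at $\infty$, a basic neighbourhood of $\infty$ in $\hat{\CC}$ has the form $\{\abs{s}>R\}\cup\{\infty\}$, whose intersection with $U\cup\{\infty\}$ is, for $R>1$, the leftward horizontal half-strip $\{\sigma<-\sqrt{R^2-1},\ \abs{t}<1\}$ together with the point at infinity; this is connected.

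The second step applies Arakelian's theorem to the continuous function $-\log f$ on the Arakelian set $\bar V$ (holomorphic on the interior $V$) with constant tolerance $1$, yielding an entire function $h$ such that $\abs{h+\log f}<1$ on $\bar V$. Setting $g := e^{h}\cdot f$, the ratio $f/g = e^{-h}$ is entire and nowhere vanishing, so $f/g$ has no zeroes or poles and $g$ has exactly the same divisor as $f$. Moreover, for every $s\in\bar V$,
\[
    \abs{g(s)} = e^{\Re(h(s)+\log f(s))},
\]
and $\abs{\Re(h+\log f)} \leq \abs{h+\log f} < 1$, giving the uniform bounds $e^{-1}<\abs{g(s)}<e$ outside the half-strip.

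The only substantial ingredient is Arakelian's theorem; the rest is the topological verification that $\bar V$ is Arakelian, which is the part I would expect to be the main obstacle for a reader unfamiliar with uniform approximation theory on non-compact sets. A natural alternative would be to build $g$ by a Weierstrass-type product tailored to the half-strip, but making the correction factors bounded above \emph{and} below outside the strip seems to force factors of order $>1$, which makes convergence delicate; the Arakelian route avoids this entirely.
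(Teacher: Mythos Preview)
Your proof is correct and follows essentially the same route as the paper's: approximate a branch of $\log f$ on the complement of the half-strip by an entire function via Arakelian's theorem, then set $g=e^{h}f$ (the paper writes $g=e^{-\phi}f$ with $\phi$ approximating $\log f$, which is the same up to a sign convention). Your version is in fact more detailed, since you verify explicitly that $\bar V$ is an Arakelian set; the only cosmetic slip is the phrase ``on an open neighbourhood of $\bar V$'' for the logarithm, where what you actually need (and what your simple-connectedness argument gives) is a continuous branch on $\bar V$ that is holomorphic on $V$, which is exactly the input Arakelian's theorem requires.
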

\begin{proof}
    This is a direct consequence of Arakelian's approximation theorem (see \cite{entire_function_approx}). The theorem allows us to create an entire function $\phi(s)$ satisfying that $\abs{\phi(s)-\log(f(s))}$ is uniformly bounded outside the half-strip. The desired function is now
    \[
        g(s)=e^{-\phi(s)}f(s).
    \]
\end{proof}

\begin{claim} \label{claim:locally_trivial_at_C}
    Let $F=\CC$. Then the $S(\CC^\times)$-module $S(\CC)$ is isomorphic to $S(\CC^\times)$.
\end{claim}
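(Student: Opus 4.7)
The plan is to follow the strategy of Claim~\ref{claim:locally_trivial_at_R} verbatim, working one isotypic component at a time under the action of the rotation subgroup $S^1 \subset \CC^\times$. Since rotations are central in $\CC^\times$, both $S(\CC)$ and $S(\CC^\times)$ decompose as topological direct sums of isotypic pieces $S(\CC)_n$ and $S(\CC^\times)_n$ indexed by $n \in \ZZ$, and the module structure respects this decomposition, so an isomorphism of modules will be assembled from compatible isomorphisms on each piece.

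First I would reduce each isotypic piece to a radial (one-variable) problem: a function $f \in S(\CC)_n$ is determined by an angular character of weight $n$ times a radial function which vanishes to order $|n|$ near the origin, while $S(\CC^\times)_n$ is a Schwartz space in the logarithmic radial coordinate. Via the Mellin transform in $r = |z|$, the task then becomes producing, for each $n$, a meromorphic multiplier $g_n(s)$ whose divisor matches that of the standard local L-factor $\Gamma_\CC(s + |n|/2) = 2(2\pi)^{-s - |n|/2}\Gamma(s + |n|/2)$, namely simple poles at $s = -|n|/2 - k$ for integers $k \geq 0$ and no zeroes, while being bounded from above and below in vertical strips. The standard gamma factor itself has the right divisor but decays too rapidly in vertical strips by Stirling's formula, just as in the real case. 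Applying Lemma~\ref{lemma:correct_vertical} yields such a $g_n(s)$, and multiplication by $g_n(s)$ on the Mellin side then defines an isomorphism $S(\CC^\times)_n \xrightarrow{\sim} S(\CC)_n$; surjectivity follows as in the real case by inductively subtracting Gaussian-type correction terms to shift away successive poles.

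The main obstacle I anticipate is uniformity in $n$. For the family of isotypic isomorphisms to assemble into a single bounded isomorphism $S(\CC^\times) \xrightarrow{\sim} S(\CC)$ rather than only a formal direct sum, the multipliers $g_n(s)$ must satisfy bounds on vertical strips that are at worst polynomial in $n$, with matching polynomial lower bounds, so that their inverse Mellin transforms assemble into a single convolution kernel on $\CC$ with the required decay. I would therefore apply Lemma~\ref{lemma:correct_vertical} to the whole family $\Gamma_\CC(s + |n|/2)$ at once: the shift by $|n|/2$ merely translates the exceptional half-strip, and the Arakelian approximation $\phi_n(s)$ to $\log \Gamma_\CC(s + |n|/2)$ can be chosen with $|\phi_n(s) - \log \Gamma_\CC(s + |n|/2)|$ controlled uniformly in $n$ on a common enlarged strip. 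This should give bounds of the form $c_\sigma(1+|n|)^{-A} \leq |g_n(\sigma + it)| \leq C_\sigma(1+|n|)^{A}$ outside the strip, which is exactly what is needed for the assembled map to be an isomorphism of bornological $S(\CC^\times)$-modules.
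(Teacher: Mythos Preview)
Your approach is essentially the same as the paper's: decompose under the $S^1$-action, pass to the Mellin picture, and produce multipliers $g_n(s)$ with simple poles at $-\tfrac{|n|}{2}-\ZZ_{\geq 0}$, no zeroes, and two-sided vertical bounds uniform in $n$. You also correctly isolate the only real issue, uniformity in $n$.

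Where the paper is sharper is in how it obtains that uniformity. You propose to run Lemma~\ref{lemma:correct_vertical} (Arakelian) on the whole family $\Gamma_\CC\bigl(s+\tfrac{|n|}{2}\bigr)$ and argue that the approximants $\phi_n$ can be chosen with errors controlled uniformly in $n$. That is plausible but is not what the lemma literally says, and would need its own argument. The paper bypasses this entirely: since the family consists of shifts of a single function, it takes the \emph{same} $g$ already built in Claim~\ref{claim:locally_trivial_at_R} and sets
\[
g_n(s)=g(|n|+2s).
\]
The required two-sided bounds $\sup_{t,n}|g_n(\sigma+it)|<\infty$ and $\sup_{t,n}|g_n(\sigma+it)|^{-1}<\infty$ for $\sigma\in\tfrac14+\tfrac12\ZZ$ then follow immediately from the bounds on $g$ outside its half-strip, with no polynomial loss in $n$ and no new appeal to Arakelian. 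Your own observation that ``the shift by $|n|/2$ merely translates the exceptional half-strip'' is exactly the insight that leads to this; pushing it one step further gives the paper's one-line construction.
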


Recall that the notation $\abs{\cdot}_\CC=\abs{N_{\CC/\RR}(\cdot)}$ denotes the absolute value of the norm, and thus differs from the usual absolute value by a power of $2$.

\begin{remark} \label{remark:paley_wiener_at_C}
    In order to prove Claim~\ref{claim:locally_trivial_at_C}, we will need to give a Paley-Wiener style description for $S(\CC^\times)$. Indeed, we have
    \[
        S(\CC^\times)=S\!\left(\RR^\times_{>0}\times\RR/{2\pi i\ZZ}\right)=S(\RR^\times_{>0})\,\hat{\otimes}\,S\!\left(\RR/{2\pi i\ZZ}\right)
    \]
    Thus, the Mellin transform
    \[
        \hat{f}_n(s)=\int f(z)\left(\frac{z}{\abs{z}}\right)^{n}\abs{z}_\CC^s\dtimes{z}
    \]
    of a function $f\in S(\CC^\times)$ is entire in $s$, and satisfies that its semi-norms
    \[
        \norm{\hat{f}_n(s)}_{\sigma,m}=\sup_{\stackrel{t\in\RR}{n\in\ZZ}}(1+\abs{t}^m)(1+\abs{n}^m)\abs{\hat{f}_n(\sigma+it)}
    \]
    are bounded for all $\sigma\in\RR$ and $m\geq 0$. This description exactly characterizes the image of $S(\CC^\times)$ under the Mellin transform.
    
    In addition, we will make use of the following description of the Mellin transform of $S(\CC)$. A sequence $\{{f}_n(s)\}_{n\in\ZZ}$ of meromorphic functions lies in the image of the Mellin transform of $S(\CC)$ if and only if:
    \begin{enumerate}
        \item Each function ${f}_n$ has at most simple poles, and they are located inside the set $-\frac{\abs{n}}{2}-\ZZ_{\geq 0}$.
        \item The semi-norms $\norm{{f}_n(s)}_{\sigma,m}$ are bounded for all $\sigma\in \frac{1}{4}+\frac{1}{2}\ZZ$ and $m\geq 0$.
    \end{enumerate}
    This description can be proven via standards methods, using the fact that the Mellin transforms of the functions $z^m e^{-\pi\abs{z}^2}$ satisfy the above requirements.
\end{remark}

\begin{proof}[Proof of Claim~\ref{claim:locally_trivial_at_C}]
    In a similar manner to the proof of Claim~\ref{claim:locally_trivial_at_R}, it is sufficient to supply a sequence of functions $\{{g}_n(s)\}_{n\in\ZZ}$ such that:
    \begin{enumerate}
        \item Each function $g_n(s)$ has no zeroes, and has a simple pole at $-\frac{\abs{n}}{2}-m$ for all integer $m\geq 0$.
        \item The functions $g_n(s),g_n(s)^{-1}$ satisfy some moderate growth condition. We will choose $\{{g}_n(s)\}_{n\in\ZZ}$ such that:
        \[
            \sup_{\stackrel{t\in\RR}{n\in\ZZ}}\abs{{g}_n(\sigma+it)}<\infty, \qquad\sup_{\stackrel{t\in\RR}{n\in\ZZ}}\abs{{g}_n(\sigma+it)}^{-1}<\infty
        \]
        for all $\sigma\in \frac{1}{4}+\frac{1}{2}\ZZ$.
    \end{enumerate}
    
    Our choice is simply
    \[
        g_n(s)=g(\abs{n}+2s),
    \]
    where $g(s)$ is the same function as in the proof of Claim~\ref{claim:locally_trivial_at_R}.
\end{proof}

This completes the proof of Theorem~\ref{thm:L_is_triv}.

\end{appendices}

\Urlmuskip=0mu plus 1mu\relax
\bibliographystyle{alphaurl}
\bibliography{L_func}

\end{document}